\documentclass[12pt]{amsart}

\usepackage{preamble}

\title{
Zero cycles on products of elliptic curves over local fields with supersingular reduction
} 
\author{Alejandro De Las Pe\~nas Casta\~no}
\address{Department of Mathematics, University of Virginia, Charlottesville, VA 22904}
\email{ad7ag@virginia.edu}

\begin{document}

\begin{abstract}
  For a product $E_1\times E_2$ of two elliptic curves over a $p$-adic field with good supersingular reduction, we produce infinitely many rational equivalences in the Chow group $\CH_0(X)$ of zero cycles via genus 2 covers of $E_1$ and $E_2$. We use this to obtain evidence for a conjecture of Colliot-Th\'el\`ene about the structure of the Albanese kernel. 
\end{abstract}

\maketitle
\section*{Introduction}

Let $X$ be a smooth projective variety defined over a field $K$. A fundamental invariant of $X$ is the Chow group $\CH_0(X)$ of 0-cycles modulo rational equivalence. It is a generalization of the Picard group of curves and similarly comes equipped with an Abel-Jacobi map
\[
    \mathrm{alb_X}:F^1(X)\rightarrow\mathrm{Alb}_X(K),
\]
called the \emph{Albanese} map, where the domain $F^1(X)\subseteq\CH_0(X)$ is the subgroup of 0-cycles of degree zero and $\mathrm{Alb}_X$ is an abelian variety with the same universal property the Jacobian of a curve has, namely, when $X$ possesses a $k$-rational point, there is a map $X\rightarrow\mathrm{Alb}_X$ through which every map from $X$ into an abelian variety factors uniquely. 


As opposed to the case of curves, the Albanese map need not be injective and its kernel, denoted by $F^2(X)$, is known to vary wildly. For instance it is finite when $K$ is finite \cite{KatoSaito1983}, but can be uncountable, when $K=\mathbb{C}$ \cite{Mumford1969}. When $K$ is a number field the problem of determining $F^2(X)$ remains wide open. In this direction, there are far-reaching conjectures by Beilinson and Bloch \cite{Beilinson1985,Bloch1984} that suggest that $F^2(X)$ is finite.

A historically fruitful testing ground for such conjectures is the case when $K$ is a $p$-adic field, i.e. a finite extension of $\mathbb{Q}_p$. In this case, we have the following conjecture of Colliot-Th\'el\`ene, later refined by Raskind and Spiess.

\begin{letterconjecture}\label{conj:CT}
  (\cite[Conjecture 1.4]{Colliot-Thelene1995}, \cite[Conjecture 3.5.4]{RaskindSpiess2000}) Let $X$ be a smooth projective geometrically connected variety defined over a $p$-adic field $K$. Then $F^2(X)\cong D\oplus F$, where $D$ is a divisible group and $F$ is a finite group.
\end{letterconjecture}

An important advancement in this direction is the work of Saito and Sato \cite{SaitoSato2010} where they show, under some mild technical conditions, that the larger group $F^1(X)$ is the direct sum of a finite group and a group that is divisible by every integer coprime to $p$. Thus the key unresolved aspect of Colliot-Th\'el\`ene's conjecture is proving that the quotients $F^2(X)/p^n$ are finite for all $n\geq1$, along with determining the asymptotic behavior of their sizes as $n\rightarrow\infty$.

The full conjecture has been established only in very limited cases and for specific classes of varieties. In 2000, Raskind and Spiess proved the conjecture for varieties that are products of curves whose jacobians have a mixture of good ordinary reduction and split multiplicative reduction. Later, Gazaki and Leal were able to prove the conjecture for $X=E_1\times E_2$ where at most one curve was allowed to have supersingular reduction \cite{GazakiLeal2022}. The case when both curves have supersingular reduction is still unknown and is the case of study of this paper.

The starting point of the above results is to show that $F^2(E_1\times E_2)$ is isomorphic to a certain $K$-group $K(K;E_1,E_2)$ called the \emph{Somekawa} $K$-group \cite{Somekawa1990}. It is known that $F^2(E_1\times E_2)$ is generated by the 0-cycles on $E_1\times E_2$ that come from intersecting the pullbacks of 0-cycles on $E_1$ and $E_2$. The bilinear nature of these generators yields a surjection
\[
    \bigoplus_{L/K\,\text{finite}}E_1(L)\otimes_\mathbb{Z} E_2(L) \epi F^2(E_1\times E_2).
\]
Raskind and Spiess were able to compute the kernel and show that the quotient is precisely the Somekawa $K$-group $K(K;E_1,E_2)$. This quotient is visibly generated by equivalence classes of simple tensors of the form $P_1\otimes P_2$ with $P_i\in E_i(L)$ as $L$ ranges over all finite extensions of $K$; such a class is called an $L/K$-\emph{symbol} and is denoted by $\{P_1,P_2\}_{L/K}$.

From this description, one can see why determining the size of $F^2(X)/p^n$ is difficult. Even showing that $F^2(X)/p$ is finite is still very hard. Indeed, each summand $E_1(L)/p\otimes E_2(L)/p$ of $K(K;E_1,E_2)/p$ can be quite large, requiring many relations to offset the shear number of generators. Furthermore, the structure of each summand depends sensitively on both the ramification of $K$ and on the reduction type of the elliptic curves.

When $E_1$ and $E_2$ have good reduction and the absolute ramification index $e$ of $K$ is small, i.e. $e<p-1$, it is expected that
\[
    K(k;E_1,E_2)/p=0.
\]
This expectation comes from the vanishing of a certain cycle map on $K(K;E_1,E_2)/p$ \cite[Corollary 8.1]{Gazaki2018}. So in order to prove a statement of this type, one must be able to produce enough relations in $K(K;E_1,E_2)$ to cancel the generators of the Somekawa $K$-group modulo $p$. In this paper, we focus on the ``first level'' of $K(K;E_1,E_2)/p$, that is the subgroup of $K/K$-symbols.

There are two types of relations in $K(K;E_1,E_2)$ one can draw from: the first, called the \emph{Projection Formula} (PF), and the second, called \emph{Weil Reciprocity} (WR). The former type of relation encodes how points on $E_1\times E_2$, when viewed over different fields extensions, interact with each other via norm and restriction maps. In fact, these relations were a key component of the previously mentioned methods of Raskind-Spiess and Gazaki-Leal. However, when both elliptic curves have supersingular reduction, PF relations fail to produce enough cancellations.

In this paper we produce infinitely many distinct WR relations in $K(K;E_1,E_2)$. For these to be useful, we need a way to classify them. One can show that the supersingularity assumption implies that every $\{P_1,P_2\}_{K/K}$ will be congruent modulo $p$ to a $K/K$-symbol $\{\widehat{P}_1,\widehat{P}_2\}_{K/K}$ where $\widehat{P}_i$ lies in the formal group $\widehat{E}_i(\mathfrak{m})$ (see Section \ref{sec:ssing}). We can hence distinguish WR relations with the \emph{signature} of $\{P_1,P_2\}_{K/K}$, defined to be the pair of integers $(n_1,n_2)$ where $n_i$ is the valuation of $\wh{P}_i$ viewed as an element of the formal group (cf. Definition \ref{def:signature-of-formal-point-mod-p}). The first main result of this paper is the following.

\begin{lettertheorem}\label{thm:main-theorem-intro}
  Let $p>3$ be prime. Let $K$ be a $p$-adic field that is totally ramified over $\Q_p$. Let $E_1$ and $E_2$ be elliptic curves defined over $K$ with full $K$-rational 2-torsion and supersingular reduction. Then for any $n>0$, there exists a WR relation $\{P_1,P_2\}_{K/K}=0$ in $K(K;E_1,E_2)$ of signature $(n,n)$.
\end{lettertheorem}

The WR relations produced in Theorem \ref{thm:main-theorem-intro} come from genus 2 covers of $E_1$ and $E_2$. More precisely, there is a WR relation in $K(K;E_1,E_2)$ for every curve $C$ over $K$, whose Jacobian can be mapped into $E_1\times E_2$, and a choice of rational function $f\in K(C)^\times$. In this paper, we use a construction due to J. Scholten of a genus 2 curve $C$ and explicit nonconstant maps $\f_1:C\ra E_1$ and $\f_2:C\ra E_2$. Since $C$ is hyperelliptic it has particularly simple principal divisors, so the choice of rational function can be made flexibly. Armed with a suitable choice of $C,\f_1,\f_2$ and $f$, we can then $p$-adically approximate the resulting WR relation to determine its signature.

This paper is organized into three sections. In section \ref{sec:prelim}, we review all the basic theory we will need about formal groups of elliptic curves, supersingular reduction, and the Somekawa $K$-group for products of elliptic curves. Along the way we define the signature of a $K/K$-symbol and determine the type and quantity of WR relations one needs to show the vanishing of all $K/K$-symbols modulo $p$. In section \ref{sec:scholten_curves} we review the construction of the hyperelliptic curves that map to $E_1\times E_2$, compute the WR relations they produce, and use them to prove Theorem \ref{thm:main-theorem-intro}. Lastly, in section \ref{sec:computations}, we focus on the case of quadratic ramified extensions $K$ and present computational evidence that the genus 2 spans of section \ref{sec:scholten_curves} produce the necessary WR relations to show that all $K/K$-symbols vanish modulo $p$.

\section*{Notation}

Throughout, $p$ will be a prime larger than 3 and $K$ will be a finite extension of $\Q_p$, the field of $p$-adic numbers. The $p$-adic field $K$ comes equipped with a unique normalized valuation we denote by $\nu$. Let $\O$ be its valuation ring, $\m$ its unique maximal ideal, and $k=\O/\m$ its residue field. Sometimes we will fix a uniformizer $\pi$, so in particular $\m=(\pi)$ and $\nu(\pi)=1$. The reduction map $\O\ra k$ will be denoted by $\alpha\mapsto\ol{\a}$. The ramification index of $K$ over $\Q_p$ will be denoted by $e$ so in particular $\nu(p)=e$. If we are working over different base fields at the same time we label the above objects with the corresponding field, e.g. $\O_K,\m_K,k_K$, etc.

We will denote elliptic curves with $E$, these are smooth projective curves of genus 1 defined over $K$ with a fixed $K$-rational point we denote by $O$ that serves as the identity of the abelian group of $K$-rational points $E(K)$. Given a point $P\in E(K)$, we denote its coordinates by $x_P$ and $y_P$, or by $x(P)$ and $y(P)$. We denote the function field of $E$ by $K(E)$ and given a closed point $Q$ of $E$, we denote the residue field of $Q$ by $K(Q)$.

Elliptic curves defined over $p$-adic fields have minimal Weierstrass equations of the form
\[
  y^2 = x^3 + a_2 x^2 + a_4 x + a_6,
\]
where $a_i\in\O$. If we reduce each coefficient modulo $\m$, we obtain an elliptic curve $\ol{E}$ defined over the residue field $k$. There is a well-defined reduction map $E(K)\ra\ol{E}(k)$ which we also denote by $P\mapsto\ol{P}$.

Given any abelian group $A$, such as $E(K)$ or the formal group of $E$, and an integer $n$, we denote the $n$-torsion subgroup of $A$ by $A[n]$ and the cokernel of multiplication-by-$n$ by $A/n$. The equivalence class of $a\in A$ in $A/n$ will be denoted by $[a]$.

\section*{Acknowledgements}

The author would like to thank his academic siblings T. Jaklitsch and M. Wills for helpful discusions,  H. Saad for pointing out a significant simplification to one of the lemmas, and the author's advisor E. Gazaki for their support and guidance. This paper was written while the author was partially supported by E. Gazaki's NSF grant DMS-2305231.

\section{Preliminaries}\label{sec:prelim}

\subsection{Formal Groups}\label{sec:formal-groups}

In this section, we review the necessary background on formal groups and prove some structural results for $\E(\m)/p$. For more details, we refer the reader to \cite[Chapter IV]{Silverman2013} or \cite[Part II, Chapter IV]{Serre1992}.

Let $E/K$ be an elliptic curve and let $\E(\m)$ be its associated formal group. The underlying set of $\E(\m)$ is the maximal ideal $\m\subset\O$ which induces the filtration
\[
  \E(\m) \supseteq \E(\m^2) \supseteq \E(\m^3) \supseteq \cdots,
\]
where the underlying set of $\E(\m^n)$ is $\m^n$.

Now lets assume that $E$ has good reduction, i.e. the reduced curve $\bar{E}$ is non-singular over $k$. It is well-known \cite[Proposition VII.2.1]{Silverman2013} that there is an exact sequence:
\begin{equation}\label{eq:reduction-sequence}
  \begin{tikzcd}
    0 \arrow[r] & E_1(K) \arrow[r] & E(K) \arrow[r] & \bar{E}(k) \arrow[r] & 0
  \end{tikzcd}
\end{equation}
where $E_1(K)$ is the kernel of the reduction map. In fact, this kernel is isomorphic to the formal group of $E$ via
\[
  E_1(K)\lra\E(\m)\quad\text{defined by}\quad P=(X,Y)\mapsto -X/Y.
\]
We will implicitly make this identification throughout.

The multiplication-by-$p$ map in $\E(\m)$ is compatible with the filtration in the following sense.

\begin{theorem}\label{thm:image_of_mult_by_p_map_formal_groups}(Theorem 4, pg. 119 of \cite{Serre1992}) If $r>e/(p-1)$, then $p\E(\m^r)=\E(\m^{r+e})$. In particular, if $e<p-1$, then $p\E(\m)=\E(\m^{1+e})$.
\end{theorem}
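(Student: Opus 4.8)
The plan is to transport the statement to the additive formal group by means of the formal logarithm, where multiplication by $p$ is honest multiplication by $p$. Recall that $\E$ carries a formal logarithm $\log_{\E}(T)=T+\tfrac{b_2}{2}T^2+\cdots\in K[[T]]$, obtained by integrating the invariant differential $\omega=(1+\cdots)\,dT$, together with its compositional inverse $\exp_{\E}(T)=T+\cdots$. Here $b_n\in\O$, while the denominator of the $n$-th coefficient of $\exp_{\E}$ divides $n!$. A direct estimate with these bounds shows that for every $r>e/(p-1)$ both series converge on $\m^r$, send $\m^r$ into $\m^r$, and are mutually inverse there, so that they induce mutually inverse group isomorphisms $\E(\m^r)\cong(\m^r,+)$ (see e.g. \cite[\S IV.6]{Silverman2013}); this range is precisely what the factorial denominators in $\exp_{\E}$ dictate, which is why the hypothesis reads $r>e/(p-1)$. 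The same statement applies with $r$ replaced by $r+e$, since $r+e>r>e/(p-1)$.

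Granting this, the theorem is immediate. Since $\log_{\E}$ is a homomorphism of formal groups from $\E$ to the additive one, it intertwines $[p]_{\E}$ with multiplication by $p$; hence
\[
  p\,\E(\m^r)=\exp_{\E}\bigl(p\cdot\log_{\E}(\E(\m^r))\bigr)=\exp_{\E}(p\,\m^r)=\exp_{\E}(\m^{r+e})=\E(\m^{r+e}),
\]
where $p\,\m^r=\m^{r+e}$ because $\nu(p)=e$, and the final equality uses that $\exp_{\E}$ is a bijection of $\m^{r+e}$ onto itself. The ``in particular'' is the case $r=1$: when $e<p-1$ one has $1>e/(p-1)$, so $p\,\E(\m)=\E(\m^{1+e})$.

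The main obstacle is the convergence claim for $\log_{\E}$ and $\exp_{\E}$ on $\E(\m^r)$ in precisely the range $r>e/(p-1)$: $\log_{\E}$ poses no difficulty, but $\exp_{\E}$ carries factorial denominators, and checking that they remain dominated by the linear term exactly for $r>e/(p-1)$ is the real content. A more hands-on route avoids the logarithm altogether. Write $[p](T)=pT+\sum_{i\geq2}a_iT^i\in\O[[T]]$ and use the identity $[p]^{*}\omega=p\,\omega$, i.e. $P([p](T))\cdot[p]'(T)=p\,P(T)$ with $P\in\O[[T]]$ a unit power series, to conclude $[p]'(T)\in p\,\O[[T]]$; thus $i\,a_i\in p\O$ for all $i$, and so $\nu(a_i)\geq\nu(p)=e$ whenever $p\nmid i$. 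Feeding this into the expansion of $[p](v)$ shows $\nu([p](v))=\nu(v)+e$ for every $v$ with $\nu(v)\geq r$, which gives the inclusion $p\,\E(\m^r)\subseteq\E(\m^{r+e})$ at once. The reverse inclusion — solving $[p](v)=w$ for a prescribed $w\in\m^{r+e}$ — is the delicate point of this second route; it can be handled by successive approximation, starting from $v_0=w/p\in\m^r$ and correcting inside the formal group, each correction raising the valuation of the error by at least one thanks to the same estimate.
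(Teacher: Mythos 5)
Your argument is correct. Note that the paper does not prove this statement at all --- it is imported verbatim from \cite{Serre1992} --- so there is no in-paper argument to compare against. Your first route is precisely the standard proof (cf.\ \cite[IV.6.4]{Silverman2013}): the logarithm and exponential are mutually inverse isomorphisms $\E(\m^r)\cong(\m^r,+)$ exactly in the range $r>e/(p-1)$, because the $n$-th coefficient of $\exp_{\E}$ has denominator dividing $n!$ and $\nu(n!)\le e(n-1)/(p-1)$, so the $n$-th term of $\exp_{\E}$ evaluated on $\m^r$ has valuation at least $r+(n-1)\bigl(r-e/(p-1)\bigr)$; with that granted, the chain of equalities you write is complete and correct, including the use of the same isomorphism at level $r+e$. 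One remark on your second, logarithm-free route: the congruence $[p]'(T)\in p\,\O[[T]]$ only controls $a_i$ when $p\nmid i$. For $p\mid i$ you get no bound on $\nu(a_i)$, and the term $a_iv^i$ must instead be beaten by the leading term $pv$ via $\nu(a_iv^i)\ge i\,\nu(v)\ge p\,\nu(v)>\nu(v)+e$, which is exactly where the hypothesis $r>e/(p-1)$ enters a second time; you should say this explicitly, since as written the claim $\nu([p](v))=\nu(v)+e$ is only justified for the prime-to-$p$ terms. With that point supplied, both routes are sound, and the surjectivity step of the second route is indeed the standard successive-approximation argument you sketch.
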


\begin{remark}\label{rm:Fp-vector-space-structure-of-Emodp}
  If $e<p-1$, then $\E(\m)/p$ is a $k$-vector space and Theorem \ref{thm:image_of_mult_by_p_map_formal_groups} determines its structure as follows. The filtration of $\E(\m)$ reduces to a finite filtration
\[
  \E(\m)/p \supseteq \E(\m^2)/p \supseteq\cdots\supseteq \E(\m^e)/p \supseteq 0
\]
of length $e$. Each piece $\E(\m^i)/p$ is naturally an $\F_p$-vector space and the graded components are all the additive group of $k$ (see for example Corollary 1, pg 118 of \cite{Serre1992}). That is
\[
  \frac{\E(\m^i)/p}{\E(\m^{i+1})/p}\cong k^+.
\]
If $K$ has inertia degree $f$ over $\Q_p$, then $k^+$ is an $f$-dimensional $\F_p$-vector space and thus $\E(\m)/p$ has dimension $ef=[K:\Q_p]$. Furthermore, if for each $i=1,\ldots,e$, one chooses elements $\b_{i,1},\ldots,\b_{i,f}\in\E(\m^i)$ whose classes in $\E(\m^i)/\E(\m^{i+1})$ form an $\F_p$-basis, then the set $\{[\b_{i,j}]\mid 1\leq i\leq e,1\leq j\leq f\}$ is an $\F_p$-basis of $\E(\m)/p$. 
\end{remark}

For $e<p-1$, one can talk about valuations of elements in $\E(\m)/p$. Given a nonzero class $[\a]$ in $\E(\m)/p$, the integer $\nu(\a)$ is independent of the choice of representative. To see this, suppose $\a'$ is some other representative so $\a=\a'+p\b$ for some $\b\in\E(\m)$. Since $[\a]\neq0$, Theorem \ref{thm:image_of_mult_by_p_map_formal_groups} implies $\nu(\a)\leq e$ and similarly $\nu(\a')\leq e$. Since addition in $\E(\m)$, in fact any formal group, is given by $\E(X,Y)=X+Y+XY F(X,Y)$ for some $F\in\O[\![X,Y]\!]$ (see \S6, Chapter IV, Part II of \cite{Serre1992}), we have
\[
  \E(\a',p\b) = \a'+ p\b + p\a'\b A,
\]
for some $A\in\O$. Therefore
\[
  \nu(\a)=\nu\big(\a'+p(1+\a'A)\b\big)=\nu(\a'),
\]
since $p(1+\a'A)\b\in\E(\m^{1+e})$ has larger valuation than $\a'$ by Theorem \ref{thm:image_of_mult_by_p_map_formal_groups}.

Roughly speaking, the above argument shows that the usual valuation $\nu$ on $\E(\m)$ descends to $\E(\m)/p$. However one must be careful in declaring it a valuation. Firstly, any nonzero class $[\a]$ will have $\nu([\a])\leq e$, and secondly, $\nu([0])$ depends on its representative which can be chosen to have arbitrarily large valuation.

However, this ``valuation'' is useful for computations in $\E(\m)/p$, so we formalize it below.

\begin{definition}\label{def:signature-of-formal-point-mod-p}
  Let $K$ be a $p$-adic field with $e<p-1$. Let $E$ be an elliptic curve over $K$. The \emph{signature} of a class $[\a]\in\E(\m)/p$ is defined as
  \[
    \sS([\a]):=
    \begin{cases}
      \nu(\a) &\text{if}\; \a\not\equiv 0\spmod{p}\\
      +\infty &\text{otherwise}.
    \end{cases}
  \]
\end{definition}

As expected, the signature has several of the same properties that the valuation $\nu$ has.

\begin{proposition}\label{prop:properties-of-signature}
  Let $K$ be a $p$-adic field with $e<p-1$ and $E$ an elliptic curve over $K$. Let $[\a],[\b]\in\E(\m)/p$ be nonzero classes. Then we have the following properties.
  \begin{enumerate}[label=(\roman*)]
    \item\label{item:signature-is-non-archimedean} In general, $\sS([\a]+[\b])\geq\min\{\sS([\a]),\sS([\b])\}$, with equality if $\sS([\a])\neq\sS([\b])$.
    \item If $u\in\Z\cap O^\times$, then $\sS(u[\a])=\sS([\a])$.
  \end{enumerate}
\end{proposition}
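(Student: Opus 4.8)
The plan is to reduce both assertions to the ordinary ultrametric behaviour of the valuation $\nu$ on $\m$, using only three inputs: the formal addition law $\E(X,Y)=X+Y+XYF(X,Y)$ with $F\in\O[\![X,Y]\!]$; the identity $p\E(\m)=\E(\m^{1+e})$ from Theorem~\ref{thm:image_of_mult_by_p_map_formal_groups} (valid since $e<p-1$); and the standard expansion $[u](X)=uX+X^2g(X)$ with $g\in\O[\![X]\!]$ for multiplication by an integer $u$ (which follows from the addition law). The guiding principle is that a class in $\E(\m)/p$ is nonzero precisely when it admits a representative of valuation $\le e$: any representative of a nonzero class lies outside $p\E(\m)=\E(\m^{1+e})$, hence has valuation $\le e$, and conversely an element of valuation $\le e$ is not in $\E(\m^{1+e})=p\E(\m)$. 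So $\sS$ is nothing but ``the valuation $\nu$ read off any representative, with the proviso that representatives of valuation $>e$ carry no information''.

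For part~\ref{item:signature-is-non-archimedean}, I would fix representatives $\a,\b\in\m$ of the nonzero classes $[\a],[\b]$, so that $\nu(\a)=\sS([\a])\le e$ and $\nu(\b)=\sS([\b])\le e$, and set $m=\min\{\nu(\a),\nu(\b)\}$. The class $[\a]+[\b]$ is represented by $\E(\a,\b)=\a+\b+\a\b F(\a,\b)$; since $\nu(\a\b F(\a,\b))\ge\nu(\a)+\nu(\b)>m$ while $\nu(\a+\b)\ge m$, the ordinary ultrametric estimate gives $\nu(\E(\a,\b))\ge m$, with equality whenever $\nu(\a)\ne\nu(\b)$ (because then $\a+\b$ is the unique term of minimal valuation). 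If $[\a]+[\b]=0$ then $\sS([\a]+[\b])=+\infty\ge m$; otherwise $\sS([\a]+[\b])=\nu(\E(\a,\b))\ge m$. For the equality clause, when $\sS([\a])\ne\sS([\b])$ we have $\nu(\E(\a,\b))=m\le e$, so $\E(\a,\b)\notin\E(\m^{1+e})=p\E(\m)$; hence $[\a]+[\b]\ne 0$ and $\sS([\a]+[\b])=m$.

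For part~(ii), let $u\in\Z\cap\O^\times$, so $p\nmid u$ and $u$ is a unit of $\O$. Picking a representative $\a$ of $[\a]$, the class $u[\a]$ is represented by $[u](\a)=\a\bigl(u+\a g(\a)\bigr)$. Since $\a g(\a)\in\m$, the factor $u+\a g(\a)$ reduces to $\bar u\ne 0$ in $k$ and is therefore a unit, so $\nu([u](\a))=\nu(\a)\le e$. In particular $[u](\a)\notin\E(\m^{1+e})=p\E(\m)$, so $u[\a]\ne 0$ and $\sS(u[\a])=\nu([u](\a))=\nu(\a)=\sS([\a])$.

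The arithmetic itself is routine; the point I expect to require care is the interaction with the convention $\sS(0)=+\infty$. In both parts one has to be sure the manipulation does not silently produce the zero class (which would break the asserted equalities), and that is exactly what the bound ``valuation $\le e$'' — i.e. Theorem~\ref{thm:image_of_mult_by_p_map_formal_groups} — guarantees. Articulating that step cleanly, rather than the ultrametric estimates, is the main thing to get right.
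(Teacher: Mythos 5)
Your proof is correct and follows essentially the same route as the paper's: expand $\E(\a,\b)=\a+\b+\a\b F(\a,\b)$ and apply the ultrametric estimate for part~\ref{item:signature-is-non-archimedean}, and use $[u](T)=uT+O(T^2)$ with $u$ a unit for part~(ii). If anything, you are slightly more careful than the paper, which leaves implicit the check (via $p\E(\m)=\E(\m^{1+e})$ and $\nu\le e$) that the resulting classes are nonzero so that the equality clauses are not vacuously broken by the $\sS(0)=+\infty$ convention.
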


\begin{proof}
  Suppose $\nu(\a)=m$ and $\nu(\b)=n$, and write $\a=a_m\pi^m+a_{m+1}\pi^{m+1}+\cdots$ and $\b=b_n\pi^n+a_{n+1}\pi^{n+1}+\cdots$, with $a_i,b_j\in\O$. The sum of $\a$ and $\b$ in $\E(\m)$ is given by
  \begin{align*}
    \E(\a,\b)
    & = (a_m\pi^m + \cdots) + (b_n\pi^n+\cdots) + (a_m b_n \pi^{m+n}F(\a,\b)+\cdots),\\
    & = a_m\pi^m + b_n\pi^n + O(\pi^{m+1}).
  \end{align*}
  The first claim follows immediately. Finally, since multiplication-by-$u$ in $\E(\m)$ is multiplication-by-$u$ in $\m$ up to higher order terms (cf. \cite[IV.2.3]{Silverman2013}), there is a power series $G\in T^2\O[\![T]\!]$ such that $u\a$ as an element of $\E(\m)$ has $\pi$-adic expansion
  \[
    u\a + G(\a) = u(a_m \pi^m + \cdots ) + O(\pi^{2m}) = ua_m \pi^m + \cdots.
  \]
  Since $u$ is a unit, then $\nu(u\a)=m$. 
\end{proof}

\subsection{Elliptic Curves with Supersingular Reduction}\label{sec:ssing}

In this section, we focus on elliptic curves with supersingular reduction and identify the group $\E(\m)/p$ with $E(K)/p$ for such curves, this last quotient being a key component of the Somekawa $K$-group modulo $p$. We begin by briefly reviewing the definition of supersingular elliptic curves, of which there are a wealth of equivalent definitions. Below we list the two definitions we will need.
\begin{definition}\label{def:ssing}
  Let $E$ be an elliptic curve defined over $K$ with good reduction. We say that $E$ has \emph{supersingular reduction} if the reduced curve $\ol{E}$ defined over $k$ is supersingular, that is, it satisfies one of the following equivalent conditions:
  \begin{enumerate}
    \item $\ol{E}\left(\ol{k}\right)[p]=0$.
    \item\label{item:Duering} (Duering's criterion) If $\ol{E}$ is defined by a Weierstrass equation of the form $y^2=f(x)$, where $f(x)\in k[x]$ is a separable cubic polynomial, then the coefficient of $x^{p-1}$ of $f(x)^{(p-1)/2}$ is zero.
  \end{enumerate}
\end{definition}

\begin{remark}\label{rm:about-ssing-red} (about supersingular reduction)
  \begin{enumerate}
    \item Since $\ol{E}(k)$ is a finite abelian group, Lagrange's theorem guarantees that $E$ has supersingular reduction if and only if $|\ol{E}(k)|$ is coprime to $p$.
    \item If $p>3$ and $k=\F_p$, then $|\ol{E}(k)|=p+1$. See \cite[Exercise 5.10]{Silverman2013}.
    \item If $E$ has supersingular reduction and $L$ is a finite extension of $K$, then the base change $E_L$ also has supersingular reduction.
    \item If $E$ is in Legendre form $E_\l:y^2=x(x-1)(x-\l)$ for some $\l\neq 0,1$, then Duering's criterion can be restated as follows: $E_\l$ is supersingular if and only if $\l$ is a root of the following polynomial:
    \[
      H_p(T):=\sum_{i=0}^{m}\binom{m}{i}^2 T^i,\qquad m=\tfrac{p-1}{2}.
    \]
    See for example \cite[V.4.1]{Silverman2013}.
  \end{enumerate}
\end{remark}

\begin{lemma}\label{lemma:formal-group-mod-p}
  Let $E$ be an elliptic curve over $K$ with supersingular reduction. Then
  \[
    E(K)/p\cong \E(\m)/p.
  \]
  This isomorphism is induced by the inclusion $\E(\m)\hookrightarrow E(K)$.
\end{lemma}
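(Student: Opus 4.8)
The plan is to apply the snake lemma to the reduction exact sequence (\ref{eq:reduction-sequence}). Recall that good reduction gives the short exact sequence of abelian groups
\[
  0 \ra E_1(K) \ra E(K) \ra \ol{E}(k) \ra 0,
\]
together with the identification $E_1(K) \cong \E(\m)$. Applying the snake lemma to the multiplication-by-$p$ maps on each term produces the six-term exact sequence
\[
  0 \ra \E(\m)[p] \ra E(K)[p] \ra \ol{E}(k)[p] \ra \E(\m)/p \ra E(K)/p \ra \ol{E}(k)/p \ra 0.
\]

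The supersingularity hypothesis enters only through the two outer $\ol{E}(k)$-terms. By Remark \ref{rm:about-ssing-red}(1), the order of the finite abelian group $\ol{E}(k)$ is coprime to $p$, so multiplication-by-$p$ is an automorphism of $\ol{E}(k)$; hence $\ol{E}(k)[p] = 0$ and $\ol{E}(k)/p = 0$. Substituting these vanishings into the six-term sequence collapses it to an isomorphism $\E(\m)/p \xrightarrow{\;\sim\;} E(K)/p$. By the construction of the snake-lemma maps, this arrow is exactly the one induced by the inclusion $\E(\m) = E_1(K)\hookrightarrow E(K)$, which is the assertion of the lemma.

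I do not expect any real obstacle here; the one point worth a sentence is the identification of the resulting isomorphism with the inclusion-induced map, which is automatic from the naturality of the snake sequence. Alternatively one can avoid the snake lemma altogether: the inclusion $\E(\m)\hookrightarrow E(K)$ has cokernel $\ol{E}(k)$ of order prime to $p$, and an injection of abelian groups whose cokernel has order prime to $p$ stays an isomorphism after applying $-\otimes_{\Z}\Z/p$, giving the claim directly.
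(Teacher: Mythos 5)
Your proof is correct and follows essentially the same route as the paper: apply the snake lemma to the reduction exact sequence with the multiplication-by-$p$ vertical maps, and use supersingularity to kill both $\ol{E}(k)[p]$ and $\ol{E}(k)/p$. The only cosmetic difference is that you invoke coprimality of $|\ol{E}(k)|$ to $p$ where the paper argues that an injective endomorphism of a finite group is bijective; these are interchangeable.
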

\begin{proof}
  The reduction exact sequence \eqref{eq:reduction-sequence} fits into the following commutative diagram with exact rows and columns:
  \[
    \begin{tikzcd}
      0 \arrow[r] & \E(\m) \arrow[r] \arrow[d,"p"] & E(K) \arrow[r] \arrow[d,"p"]  & \ol{E}(k) \arrow[r] \arrow[d,"p"] & 0 \\
      0 \arrow[r] & \E(\m) \arrow[r] & E(K) \arrow[r] & \ol{E}(k) \arrow[r] & 0
    \end{tikzcd}
  \]
  The Snake Lemma yields the exact sequence
  \[
    \begin{tikzcd}
      \ker(\text{multiplication-by-$p$}) \arrow[r] & \E(\m)/p \arrow[r] & E(K)/p \arrow[r] & \ol{E}(k)/p.
    \end{tikzcd}
  \]
  Since $E$ has supersingular reduction, then the multiplication-by-p map on the reduced curve is an injective map of finite groups and hence bijective. Thus the above exact sequence has zeros at its edges, and thus the map $\E(\m)/p\ra E(K)/p$ induced by the inclusion is an isomorphism.

\end{proof}

\begin{remark}
  The diagram in the proof of Lemma \ref{lemma:formal-group-mod-p} is preserved under base change to any finite extension $L$ of $K$. Since $\ol{E_L}(k_L)[p]$ is still zero, then the above proof can be upgraded to show an isomorphism $E/p \cong \E/p$ of Mackey functors.
\end{remark}

It is possible to realize the inverse map of the inclusion $\E(\m)/p\hookrightarrow E(K)/p$ as multiplication by a fixed integer. Let $n$ be the order of the finite group $\ol{E}(k)$. Then for any point $P\in E(K)$, the point $nP$ is in the kernel of the reduction map, i.e. $nP\in\E(\m)$. Since $n$ is coprime to $p$, there are $m,m'\in\Z$ such that $1=mN+m'p$. Thus
\begin{equation}\label{eq:formal-point-associated-to-P}
  P = mnP+m'pP \equiv mnP \pmod{pE(K)},
\end{equation}
where $mnP\in\E(\m)$.

\begin{definition}\label{def:associated-formal-point}
  Let $E$ be an elliptic curve over $K$ with supersingular reduction. Let $n=|\ol{E}(k)|$ and fix the smallest positive integer $m$ such that $mn\equiv1\pmod{p}$. For $P\in E(K)$, we define the \emph{associated formal point} of $P$ to be $\wh{P}:=mnP\in \E(\m)\subset E(K)$.
\end{definition}

As discussed in the previous paragraph, we obtain a well-defined map
\[
  E(K)/p \lra \E(\m)/p \quad\text{defined by}\quad [P]\mapsto [\wh{P}],
\]
and it is the inverse of the inclusion map by \eqref{eq:formal-point-associated-to-P}.

If $P\in E(K)$ already lies in the formal group, it is a bit of a misnomer to call $\wh{P}$ the formal point associated to $P$ when usually $P\neq\wh{P}$. In fact, if $P=\wh{P}$ then $P$ is $(mn-1)$-torsion in $\E(\m)$, and in particular $P$ lies in $\E(\m)[p]$, which will vanish when $e<p-1$. However, the signature of $[P]$ and $[\wh{P}]$ will be preserved by Proposition \ref{prop:properties-of-signature}.

For computational purposes, the integer $mn$ may be very large and so computing $mnP$ can be slow. At the cost of multiplying by a fixed integer, we can multiply by a divisor of $n$ that varies with $P$. More precisely, given a point $P$, let $m_P\mid N$ be the order of $\ol{P}$ in $\ol{E}(k)$. Then $m_P P\in\E(\m)$ and since $m_P$ is coprime to $p$, Proposition \ref{prop:properties-of-signature} implies that
\[
  \sS([P])=\sS([\wh{P}])=\sS([m_P P]).
\]
Thus when computing signatures, we are free to choose between $\wh{P}$ and $m_P P$.

\subsection{The Somekawa $K$-group}\label{sec:somekawa}

In this section we define the Somekawa $K$-group and describe some of its properties for the case of a product of two elliptic curves with supersingular reduction. For the sake of brevity, we define it only for the product of two elliptic curves over a perfect field, though it can be done for any finite product of semiabelian varieties over any field\cite{Somekawa1990}.

\begin{definition}
  Let $K$ be a perfect field and $E_1$, $E_2$ be elliptic curves over $K$. The \emph{Somekawa} $K$-group $K(K;E_1,E_2)$ is defined as the abelian group generated by symbols of the form $\{P,Q\}_{L/K}$, where $L$ is a finite extension of $K$, and $P\in E_1(L)$ and $Q\in E_2(L)$, subject to the following types of relations:
  \begin{enumerate}
    \item ($\Z$-Bilinearity) For $P,P'\in E_1(L)$ and $Q,Q'\in E_2(L)$, then
    \begin{align*}
      \{P+P',Q\}_{L/K}=\{P,Q\}_{L/K}+\{P',Q\}_{L/K},\\
      \{P,Q+Q'\}_{L/K}=\{P,Q\}_{L/K}+\{P,Q'\}_{L/K}.
    \end{align*}
    \item (Projection Formula) Given a tower of finite extensions $L'\supseteq L\supseteq K$, with norm a restriction maps $N_{L'/L}$ and $\res_{L'/L}$, and given $P\in E_1(L)$, $P'\in E_1(L')$, $Q\in E_2(L)$, and $Q'\in E_2(L')$, then
    \begin{align*}
      \{N_{L'/L}(P'),Q\}_{L/K}=\{P',\res_{L'/L}(Q)\}_{L'/K},\\
      \{P,N_{L'/L}(Q')\}_{L/K}=\{\res_{L'/L}(P),Q'\}_{L'/K}.
    \end{align*}
    \item (Weil Reciprocity) Given a smooth complete curve $C$ defined over $K$ with morphisms $\f_1:C\ra E_1$ and $\f_2:C\ra E_2$ defined over $K$, and a function field element $f\in K(C)^\times$,
    \[
      \sum_{P\in C} \ord_P(f)\{\f_1(P),\f_2(P)\}_{K(P)/K}=0.
    \]
    We say the $C$ \emph{spans} $E_1$ and $E_2$.
  \end{enumerate}
\end{definition}

\begin{remark}
  There is clearly a surjection
  \[
    \bigoplus_{L/K\,\mathrm{finite}} E_1(L)\o_\Z E_2(L) \lra K(K;E_1,E_2)
  \]
  that maps $P\otimes Q$ to $\{P,Q\}_{L/K}$ if $P\in E_1(L)$ and $Q\in E_2(L)$.
\end{remark}

As mentioned in the introduction. The Somekawa $K$-group $K(K;E_1,E_2)$ coincides with the albanese kernel $F^2(E_1\times E_2)$. In fact, we have

\begin{theorem}\label{thm:Raskind-Spiess}(Corollary 2.4.1 of \cite{RaskindSpiess2000}, Theorem 2.4 of \cite{Somekawa1990})
  Let $E_1$ and $E_2$ be elliptic curves defined over a perfect field $K$. Then
  \[
      CH_0(E_1\times E_2)\cong\Z\oplus K(K;E_1,E_2)\oplus E_1(K)\oplus E_2(K).
  \]
\end{theorem}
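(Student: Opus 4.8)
The plan is to prove the decomposition $\CH_0(E_1 \times E_2) \cong \Z \oplus K(K;E_1,E_2) \oplus E_1(K) \oplus E_2(K)$ by analyzing the standard filtration on $\CH_0$ by codimension of support, namely $\CH_0(X) \supseteq F^1(X) \supseteq F^2(X)$. First I would handle the outer pieces: the degree map $\deg: \CH_0(X) \to \Z$ is surjective (any $K$-rational point gives a splitting, and $E_1 \times E_2$ has the origin), so $\CH_0(X) \cong \Z \oplus F^1(X)$. Then I would identify $F^1(X)/F^2(X)$ with $\mathrm{Alb}_X(K)$ via the Albanese map $\mathrm{alb}_X$; for an abelian variety (or a product of curves) this map is known to be an isomorphism on $K$-points — this is where the hypothesis that $X$ is a product of curves, rather than a general surface, enters. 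Since $X = E_1 \times E_2$ and both factors have a rational point, $\mathrm{Alb}_X \cong E_1 \times E_2$ as abelian varieties, so $\mathrm{Alb}_X(K) \cong E_1(K) \oplus E_2(K)$, and this extension is split because $X$ carries a $K$-point giving a section $\mathrm{Alb}_X \to \CH_0$; altogether $F^1(X) \cong F^2(X) \oplus E_1(K) \oplus E_2(K)$.

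The remaining and genuinely substantive step is the identification $F^2(E_1 \times E_2) \cong K(K;E_1,E_2)$, which is the content of the Raskind--Spiess computation (and Somekawa's original result). I would construct the map in both directions. Going from the Somekawa group to $F^2$: for a finite extension $L/K$ and points $P \in E_1(L)$, $Q \in E_2(L)$, form the zero cycle on $(E_1 \times E_2)_L$ supported at $(P,Q)$, subtract the appropriate ``correction'' cycles along $\{O_1\} \times E_2$ and $E_1 \times \{O_2\}$ and the constant term so that the result lands in $F^2$ of the base change, then push forward along the finite flat map $(E_1\times E_2)_L \to (E_1 \times E_2)_K$. One must check this is well-defined, i.e. that it kills the three families of relations defining $K(K;E_1,E_2)$: $\Z$-bilinearity is essentially additivity of the correspondence, the Projection Formula relations come from compatibility of pushforward with norm/restriction in towers, and Weil Reciprocity relations come from the fact that a rational function $f$ on a spanning curve $C$ has principal divisor $\mathrm{div}(f)$, whose image under $(\f_1,\f_2): C \to E_1 \times E_2$ is rationally equivalent to zero in $\CH_0$.

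For the reverse direction one uses the fact, recalled in the introduction, that $F^2(E_1 \times E_2)$ is generated by intersections of pullbacks of zero cycles on the two factors; concretely one shows $F^2$ is spanned by classes $(P \times E_2)\cdot(E_1 \times Q)$ for $P,Q$ closed points, and sends such a class to the symbol $\{P, Q\}$ over the compositum of residue fields (after reducing to the diagonal situation $K(P) = K(Q) = L$ via projection formula maneuvers). Checking that these two maps are mutually inverse, and in particular that every relation among the generators of $F^2$ is already a consequence of bilinearity, projection formula, and Weil reciprocity, is the crux — this is precisely Raskind--Spiess's main theorem, so in practice I would cite \cite[Corollary 2.4.1]{RaskindSpiess2000} and \cite[Theorem 2.4]{Somekawa1990} rather than reprove it. The main obstacle, were one to do it from scratch, is the surjectivity of the Somekawa-to-$F^2$ map together with injectivity: showing that the Mackey-functor/Galois-descent presentation of the Somekawa group captures exactly the rational equivalences coming from curves on the surface. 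Since the statement is quoted as a known theorem, the honest ``proof'' here is to assemble the degree splitting and the Albanese splitting around the black-box identification $F^2 = K(K;E_1,E_2)$.
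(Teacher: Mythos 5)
The paper gives no proof of this statement; it is quoted directly from the literature with the same two citations you invoke for the crucial identification $F^2(E_1\times E_2)\cong K(K;E_1,E_2)$, so your approach is essentially the same where it matters. Your surrounding scaffolding (degree splitting, Albanese splitting via $P\mapsto[(P,O_2)]-[(O_1,O_2)]$, which is a homomorphism because rational equivalence on each curve factor realizes the group law) is correct and consistent with how Raskind--Spiess derive the full decomposition.
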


\begin{remark}
  When $K$ is a $p$-adic field, the structure of $E_1(K)$ and $E_2(K)$ are well known. This is a special case of Mattuck's Theorem \cite[Theorem 7]{Mattuck1955}: if $A$ is an abelian variety of dimension $d$ defined over a $p$-adic field $K$, then $A(K)\cong\O_K^d\oplus F$ as abelian groups, where $F$ is a finite group.
\end{remark}

We are interested in the quotient group $K(K;E_1,E_2)/p$. We know that $K(K;E_1,E_2)$ (and thus $K(K;E_1,E_2)/p$) is generated by symbols of the form $\{P_1,P_2\}_{L/K}$ as $L$ ranges over all finite extensions of $K$, and these are in the images of the natural maps $P_1\otimes P_2\mapsto \{P_1,P_2\}_{L/K}$. Thus the quotient group $K(K;E_1,E_2)/p$ is generated by the images of the composite maps
\[
    \begin{tikzcd}[column sep=15pt]
        E_1(L)\o E_2(L) \arrow[r,hookrightarrow] & \bigoplus\limits_{F/K\,\text{finite}}E_1(F)\o E_2(F) \arrow[r,two heads] & K(K;E_1,E_2) \arrow[r,two heads] & K(K;E_1,E_2)/p.
    \end{tikzcd}
\]
as $L$ ranges over all finite extensions of $K$. For a fixed $L$ it is clear that the above composite map factors through
\[
    (E_1(L)\o E_2(L))/p \cong (\Z/p\Z)\o_\Z \left(E_1(L)\o_\Z E_2(L)\right)\cong E_1(L)/p\o_{\F_p} E_2(L)/p.
\]
We obtain the $\F_p$-linear map
\begin{equation}\label{eq:Phi-L-map}
    E_1(L)/p\o_{\F_p} E_2(L)/p \lra K(K;E_1,E_2)/p,\quad [P_1] \o [P_2] \mapsto \{P_1,P_2\}_{L/K}\spmod{p}.
\end{equation}
The image of this map is precisely the subgroup of $L/K$-symbols modulo $p$ which we denote by:
\[
    \Symb_{L/K}:=\text{image of}\; E_1(L)/p\o_{\F_p} E_2(L)/p \lra K(K;E_1,E_2)/p.
\]
Hence if $\Symb_{L/K}=0$ for all finite extensions $L$ of $K$, then $K(K;E_1,E_2)/p=0$ which in turn proves Conjecture \ref{conj:CT} for $E_1\times E_2$. This is very difficult, so we address the ``first level'' of this problem. Namely, the question we study in this paper is the following.

\begin{letterconjecture}
  Let $K$ be a $p$-adic field, $E_1$ and $E_2$ elliptic curves over $K$ with supersingular reduction. Then $\Symb_{K/K}=0$.
\end{letterconjecture}

This takes us to considering the structure of $E_1(K)/p\o E_2(K)/p$. When the curves $E_1$ and $E_2$ have supersingular reduction, then the structure is completely determined by the formal groups by Lemma \ref{lemma:formal-group-mod-p} which tells us that the inclusion map $\E_i(\m)\hookrightarrow E_i(K)$ induces an isomorphism
\[
  \E_i(\m)/p \xrightarrow{\sim} E_i(K)/p
\]
with inverse $[P]\mapsto[\wh{P}]$, where $\wh{P}$ is the formal point associated to $P$.

Furthermore, if the absolute ramification index $e$ of $K$ is small, i.e. $e<p-1$, then Theorem \ref{thm:image_of_mult_by_p_map_formal_groups}, or more precisely Remark \ref{rm:Fp-vector-space-structure-of-Emodp}, gives us a set
\[
  \{\b_{i,j}\o\gamma_{i',j'}\in \E_1(\m)\o\E_2(\m)\mid 1\leq i,i'\leq e,1\leq j,j'\leq f\}
\]
whose classes modulo $p$ form an $\F_p$-basis of $\E_1(\m)/p\o\E_2(\m)/p$. Thus if
\begin{equation}\label{eq:brute-force-relations}
  \{\b_{i,j}\o\gamma_{i',j'}\}_{K/K}\spmod{p}=0,
\end{equation}
for all $i,j,i',j'$, then $\Symb_{K/K}=0$.

The above condition can be difficult to guarantee since we would need to produce $[K:\Q_p]^2$ relations in $K(K;E_1,E_2)/p$. The above basis is stratified by the valuation $\nu$ on $K$ since $\nu(\b_{i,j})=\nu(\gamma_{i,j})=i$, and so we can approach the problem of producing relations of the form \eqref{eq:brute-force-relations} ``valuation-by-valuation''. The pair of valuations $(\nu(\wh{P}_1),\nu(\wh{P}_2))$, will be essential for comparing the symbol $\{P_1,P_2\}_{K/K}$ to the above basis. This motivates the following definition.

\begin{definition}
  Let $K$ be a $p$-adic field and let $E_1,E_2$ be elliptic curves over $K$ with supersingular reduction. Let $\{P_1,P_2\}_{K/K}$ be a symbol in $K(K;E_1,E_2)$, where $P_\ell\in E_\ell(L)$ for $\ell=1,2$. The \emph{signature} of $\{P_1,P_2\}_{K/K}$ is defined as the pair of integers
  \[
    \sS(\{P_1,P_2\}_{K/K})=\left(\nu(\wh{P}_1),\nu(\wh{P}_2)\right)
  \]
  where $\wh{P}_\ell\in\E_\ell(\m)$ is the associated formal point of $P_\ell$ (see Definition \ref{def:associated-formal-point}).
\end{definition}

\begin{remark}
  Using the signature of points in $\E_\ell(\m)/p$ we defined in Section \ref{sec:formal-groups} we have:
\[
  \sS(\{P_1,P_2\}_{K/K})=\big(\sS([P_1]),\sS([P_2])\big).
\]
\end{remark}

Our proposed method of showing that $K/K$-symbols vanish in $K(K;E_1,E_2)/p$ can be roughly described as ``producing enough relations of the correct signature''. We first do some low degree examples to illustrate this method and then make it precise in Proposition \ref{prop:necessary-conditions-for-PhiL-0} below.

\begin{example}\label{ex:1-1-signature}($K=\Q_p$)
  Since $[K:\Q_p]=1$, then $\E_1(\m)/p\o\E_2(\m)/p$ is 1-dimensional over $\F_p$. Now suppose that we've produced a symbol $\{P_1,P_2\}_{K/K}$ with signature $(1,1)$ such that $\{P_1,P_2\}_{K/K}\equiv0\spmod{p}$. Since the signature is $(1,1)$, $[\wh{P}_1]\o[\wh{P}_2]$ generates $\E_1(\m)/p\o\E_2(\m)/p$ and thus $[P_1]\o[P_2]$ generates $E_1(K)/p\o E_2(K)/p$. Since
  \[
    [P_1]\o[P_2]\mapsto\{P_1,P_2\}_{L/K}\spmod{p}=0,
  \]
  then $\Symb_{K/K}=0$.
\end{example}

\begin{example}\label{ex:quadratic-ramified}($K=$ quadratic ramified extension of $\Q_p$, $p>3$) In this case $e=2$ and $f=1$ so $\E_1(\m)/p\o\E_2(\m)/p$ is 4-dimensional. Now suppose that we have four symbols
  \[
    \{P_1,P_2\}_{K/K},\{Q_1,Q_2\}_{K/K},\{R_1,R_2\}_{K/K},\{S_1,S_2\}_{K/K}\in K(K;E_1,E_2)
  \]
  of signatures $(1,1),(1,2),(2,1)$ and $(2,2)$ respectively, such that they are zero modulo $p$, e.g. $\{P_1,P_2\}_{K/K}\equiv 0\spmod{p}$. The condition on their signatures, together with Remark \ref{rm:Fp-vector-space-structure-of-Emodp}, implies that
  \[
    \{[\wh{P}_1]\o[\wh{P}_2],[\wh{Q}_1]\o[\wh{Q}_2],[\wh{R}_1]\o[\wh{R}_2],[\wh{S}_1]\o[\wh{S}_2]\}
    \]
  generates $\E_1(\m)/p\o\E_2(\m)/p$. Thus $\Symb_{K/K}=0$.
\end{example}

\begin{proposition}\label{prop:necessary-conditions-for-PhiL-0}
  Let $K$ be a $p$-adic field with absolute ramification index $e<p-1$. Let $E_1,E_2$ be elliptic curves over $K$ with supersingular reduction. Suppose we have a set of $K/K$-symbols
  \[
    \left\{\{P_{ij},Q_{i'j'}\}_{K/K}\in K(K;E_1,E_2)\mid 1\leq i,i'\leq e,1\leq j,j'\leq f\right\}
  \]
  such that
  \begin{enumerate}[label=(\roman*)]
    \item\label{item:1-Phi_L} $\{P_{ij},Q_{i'j'}\}_{K/K}\equiv 0\pmod{p}$ for all $i,i',j,'$.
    \item\label{item:2-Phi_L} $\sS(\{P_{ij},Q_{i'j'}\}_{K/K})=(i,i')$,
    \item\label{item:3-Phi_L} Both $\{\wh{P}_{i1}+\E_1(\m^{i+1}),\ldots,\wh{P}_{if}+\E_1(\m^{i+1})\}$  and $\{\wh{Q}_{i'1}+\E_2(\m^{i+1}),\ldots,\wh{Q}_{i'f}+\E_1(\m^{i+1})\}$ are $\F_p$-basis of $k\cong\E_\ell(\m^i)/\E_\ell(\m^{i+1})$ ($\ell=1,2$) for all $i$ and $i'$.
  \end{enumerate}
  Then $\Symb_{K/K}=0$.
\end{proposition}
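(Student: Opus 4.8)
The plan is to show that the hypothesized $K/K$-symbols, after passing to associated formal points, furnish a set of generators of $\E_1(\m)/p\o\E_2(\m)/p$ which all map to zero under the symbol map \eqref{eq:Phi-L-map}; since $\Symb_{K/K}$ is by definition the image of that map, this forces $\Symb_{K/K}=0$. The key observation, which we carry out first, is that hypotheses \ref{item:2-Phi_L} and \ref{item:3-Phi_L} together with Remark \ref{rm:Fp-vector-space-structure-of-Emodp} imply that the classes $\{[\wh{P}_{ij}]\mid 1\leq i\leq e,\ 1\leq j\leq f\}$ form an $\F_p$-basis of $\E_1(\m)/p$, and likewise the $[\wh{Q}_{i'j'}]$ form an $\F_p$-basis of $\E_2(\m)/p$. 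Indeed, by \ref{item:2-Phi_L} each $\wh{P}_{ij}$ has valuation exactly $i$, so its class lies in $\E_1(\m^i)/p$ but not in $\E_1(\m^{i+1})/p$; by \ref{item:3-Phi_L} the classes $\wh{P}_{i1},\ldots,\wh{P}_{if}$ descend to an $\F_p$-basis of the graded piece $\E_1(\m^i)/\E_1(\m^{i+1})\cong k^+$. This is exactly the situation of the last sentence of Remark \ref{rm:Fp-vector-space-structure-of-Emodp} (with $\b_{i,j}:=\wh{P}_{ij}$), so the full collection is a basis of $\E_1(\m)/p$. The same argument applies to $\E_2(\m)/p$.

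Next I would assemble the tensor basis. Since $\{[\wh{P}_{ij}]\}$ is an $\F_p$-basis of $\E_1(\m)/p$ and $\{[\wh{Q}_{i'j'}]\}$ is an $\F_p$-basis of $\E_2(\m)/p$, the tensor products $\{[\wh{P}_{ij}]\o[\wh{Q}_{i'j'}]\}$ form an $\F_p$-basis of $\E_1(\m)/p\o_{\F_p}\E_2(\m)/p$, and in particular they generate it. Under the isomorphism $\E_\ell(\m)/p\xrightarrow{\sim}E_\ell(K)/p$ of Lemma \ref{lemma:formal-group-mod-p}, which sends $[\wh{P}_{ij}]\mapsto[\wh{P}_{ij}]=[P_{ij}]$ and $[\wh{Q}_{i'j'}]\mapsto[Q_{i'j'}]$ (recall $\wh{P}_{ij}\equiv P_{ij}\pmod{pE_1(K)}$ by \eqref{eq:formal-point-associated-to-P}), it follows that the classes $\{[P_{ij}]\o[Q_{i'j'}]\}$ generate $E_1(K)/p\o_{\F_p}E_2(K)/p$.

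Finally I would apply the symbol map \eqref{eq:Phi-L-map} for $L=K$. It sends $[P_{ij}]\o[Q_{i'j'}]$ to $\{P_{ij},Q_{i'j'}\}_{K/K}\bmod p$, which is $0$ by hypothesis \ref{item:1-Phi_L}. Since these tensors generate the source, the image of \eqref{eq:Phi-L-map} is zero; but that image is precisely $\Symb_{K/K}$, so $\Symb_{K/K}=0$, as claimed. There is no serious obstacle here: the content is entirely in organizing the reductions correctly, and the only point requiring a moment's care is the bookkeeping in the first paragraph, namely checking that ``a basis of each graded piece, with one generator per valuation level'' really does assemble into a basis of $\E_\ell(\m)/p$ — this is guaranteed by Remark \ref{rm:Fp-vector-space-structure-of-Emodp}, but one should make sure hypotheses \ref{item:2-Phi_L} and \ref{item:3-Phi_L} are invoked in the right order so that the valuation-exactness (ensuring the generators sit in the correct filtration step and are nonzero there) precedes the basis claim on the associated graded.
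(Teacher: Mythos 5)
Your proposal is correct and follows essentially the same route as the paper: hypotheses \ref{item:2-Phi_L} and \ref{item:3-Phi_L} combine with Remark \ref{rm:Fp-vector-space-structure-of-Emodp} to make $\{[\wh{P}_{ij}]\o[\wh{Q}_{i'j'}]\}$ an $\F_p$-basis of $\E_1(\m)/p\o\E_2(\m)/p$, hence $\{[P_{ij}]\o[Q_{i'j'}]\}$ a basis of $E_1(K)/p\o E_2(K)/p$, and hypothesis \ref{item:1-Phi_L} kills every basis element under the map \eqref{eq:Phi-L-map}. The only difference is that you spell out the graded-piece bookkeeping in more detail than the paper does, which is harmless.
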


\begin{proof}
  Conditions \ref{item:2-Phi_L} and \ref{item:3-Phi_L} imply via Remark \ref{rm:Fp-vector-space-structure-of-Emodp} that
  \[
    \{[\wh{P}_{ij}]\o[\wh{Q}_{i'j'}]\mid 1\leq i,i'\leq e,1\leq j,j'\leq f\}
  \]
  is an $\F_p$-basis of $\E_1(\m)/p\o \E_2(\m)/p$ and hence
  \[
    \{[P_{ij}]\o[Q_{i'j'}]\mid 1\leq i,i'\leq e,1\leq j,j'\leq f\}
  \]
  is a basis of $E_1(K)/p\o E_2(K)/p$. Finally, condition \ref{item:1-Phi_L} implies that each basis element
  \[
    [P_{ij}]\o[Q_{i'j'}]\mapsto\{P_{ij},Q_{i'j'}\}_{L/K}\equiv 0\pmod{p},
  \]
  under the map \eqref{eq:Phi-L-map}, and thus its image $\Symb_{K/K}$ is zero.
\end{proof}

\section{Main results}\label{sec:scholten_curves}


\subsection{Weil Reciprocity}\label{sec:weil-reciprocity}

In this section we produce Weil Reciprocity relations in $K(K;E_1,E_2)$ by using a particular family of hyperelliptic curves due to J. Scholten in an unpublished paper and generalized by Gazaki and Love \cite[\S3]{GazakiLove2023}. First we define these curves and show that there always exists a suitable one for computing the signature of the resulting relations.

An elliptic curve with full 2-torsion over $K$ can be written as
\[
  E_{a,b}:y^2=x(x-a)(x-b),
\]
where $a,b\neq 0$ and $a\neq b$. Given a pair $E_{a,b}$ and $E_{c,d}$ of curves with full 2-torsion, Scholten defined a curve that mapped nontrivially into both $E_{a,b}$ and $E_{c,d}$.

\begin{definition}\label{def:scholten-curve}
  Let $a,b,c,d\in K^\times$ with $a\neq b$ and $c\neq d$. Then we define the \emph{Scholten} curve $C_{a,b,c,d}$ by the affine equation
  \[
    (ad-bc)Y^2 = ((a-b)X^2-(c-d))(aX^2-c)(bX^2-d).
  \]
  If $ad-bc\neq0$, there are maps $\f_{a,b}:C_{a,b,c,d}\ra E_{a,b}$ and $\f_{c,d}:C_{a,b,c,d}\ra E_{c,d}$ defined over $K$:
  \begin{equation}\label{eq:definitions-of-fiab-ficd}
    \begin{gathered}
      \f_{a,b}(X,Y)=\left(\frac{ab(a-b)}{ad-bc}\left(X^2-\frac{c-d}{a-b}\right),\frac{ab(a-b)}{ad-bc}Y\right),\\
      \f_{c,d}(X,Y)=\left(\frac{cd(a-b)}{(ad-bc)X^2}\left(X^2-\frac{c-d}{a-b}\right),-\frac{cd(c-d)}{(ad-bc)X^3}Y\right).
    \end{gathered}
    \end{equation}
\end{definition}

\begin{notation*}
  If there is a curve $C$ and maps $C\ra E_1$, $C\ra E_2$, all defined over $K$, we say that $E_1\leftarrow C\rightarrow E_2$ is a \emph{span} of $E_1$ and $E_2$ over $K$, or that $C$ \emph{spans} $E_1$ and $E_2$ over $K$.
\end{notation*}

\begin{remark}\label{rm:C-is-smooth}
  The right hand side of the defining equation for $C:=C_{a,b,c,d}$ is a product of three quadratics, and any pair of these have the same resultant which is equal to $(ad-bc)^2$. Thus if $ad-bc\neq 0$, then $C$ is a smooth, geometrically irreducible curve of genus 2 and comes equipped with the usual hyperelliptic involution
\[
    \iota : C \lra C \quad\text{defined by}\quad \iota(x,y)=(x,-y).
\]
Points fixed under this involution are called \emph{Weierstrass points}.
\end{remark}

The fact that $C$ is hyperelliptic will allow us to produce a rational function on $C$ quite easily and ultimately produce Weil Reciprocity relations.

\begin{proposition}\label{prop:symbols_coming_from_scholten_curves}
  Let $K$ be a field, $E_1$ and $E_2$ elliptic curves over $K$. Suppose there is a smooth hyperelliptic curve $C$ that spans $E_1$ and $E_2$ over $K$, via maps $\s_i:C\ra E_i$ that satisfy $\s_i(\iota(P))=-\s_i(P)$ for every point $P\in C(K)$. If $P$ is not fixed by the hyperelliptic involution, we have
  \[
    4\{\s_1(P),\s_2(P)\}_{K/K} = 0
  \]
  in $K(K;E_1,E_2)$.
\end{proposition}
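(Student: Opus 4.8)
The plan is to deduce the relation from a single application of Weil Reciprocity to a rational function $f\in K(C)^\times$ whose principal divisor is supported only at $P$, at $\iota(P)$, and at one Weierstrass point $W$ of $C$. The reason for throwing in $W$ is that it is fixed by $\iota$, so each $\s_i(W)$ is $2$-torsion and the symbol $W$ contributes is annihilated by $2$; the reason the bound comes out to $4$ rather than, say, $2$, is that over a quadratic field one must square $f$ to clear a denominator, doubling all multiplicities.

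First I would fix a Weierstrass point $W$ with $[K(W):K]\le 2$. This is harmless for the curves we use: an odd-degree hyperelliptic model carries a $K$-rational Weierstrass point at infinity, and a Scholten curve $C_{a,b,c,d}$ has the Weierstrass point $W=(\sqrt{c/a},0)$, defined over $K(\sqrt{ac})$. Now I would note that the hyperelliptic involution acts as $[-1]$ on $\mathrm{Jac}(C)$ and fixes $W$, and that the difference of two Weierstrass points is $2$-torsion; combined with the fact that $(P)+(\iota P)$ and $2W$ are both fibers of the degree-$2$ map $C\to\mathbb{P}^1$, this shows the degree-zero divisor $2(P)+2(\iota P)-2[W]$ is trivial in $\mathrm{Jac}(\bar K)$ (here $[W]$ is the closed point of $W$, with residue field $K(W)$). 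Since it is Galois-stable and $C$ has the $K$-point $P$, it is principal over $K$, say equal to $\mathrm{div}(f)$; explicitly, when $P$ is affine one can take $f=(X-X(P))^2/N_{K(W)/K}(X-x(W))$, and the case of $P$ at infinity is covered by the abstract existence statement. (If $W$ is $K$-rational the same argument applied to $(P)+(\iota P)-2(W)$ already gives $2\{\s_1(P),\s_2(P)\}_{K/K}=0$.)

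Next I would apply the Weil Reciprocity relation to this $f$, obtaining
\[
  2\{\s_1(P),\s_2(P)\}_{K/K}+2\{\s_1(\iota P),\s_2(\iota P)\}_{K/K}=2\{\s_1(W),\s_2(W)\}_{K(W)/K}.
\]
The hypothesis $\s_i\circ\iota=-\s_i$ (an identity of morphisms, checked on the formulas in Definition \ref{def:scholten-curve} or from agreement on the Zariski-dense set $C(K)$) gives $\s_i(\iota P)=-\s_i(P)$, so by $\Z$-bilinearity the left side equals $4\{\s_1(P),\s_2(P)\}_{K/K}$. On the right side, $\iota(W)=W$ forces $\s_i(W)=-\s_i(W)$, hence $2\s_i(W)=O$, and therefore $2\{\s_1(W),\s_2(W)\}_{K(W)/K}=\{2\s_1(W),\s_2(W)\}_{K(W)/K}=\{O,\s_2(W)\}_{K(W)/K}=0$. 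Putting these together yields $4\{\s_1(P),\s_2(P)\}_{K/K}=0$.

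The step I expect to be the real work is the construction of $f$: one needs a rational function whose divisor meets only $P$, $\iota(P)$, and a Weierstrass point of degree at most $2$ over $K$, with the correct multiplicities. Exhibiting such a Weierstrass point is automatic for the Scholten curves by inspection of their defining equation, but is a genuine hypothesis for a general smooth hyperelliptic curve; and it is precisely the quadratic (rather than rational) case, where one squares $f$ to clear the denominator, that is responsible for the factor $4$. Everything after the choice of $f$ is routine bilinear bookkeeping.
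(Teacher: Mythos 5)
Your proof is correct and follows essentially the same route as the paper: pull back a divisor along the degree-2 map $x:C\ra\P^1$ to obtain a principal divisor supported at $P$, $\iota P$ and a Weierstrass point $W$, apply Weil Reciprocity, and finish with $\s_i\circ\iota=-\s_i$, bilinearity, and the fact that $\s_i(W)$ is 2-torsion. One substantive point where your version is actually the more careful one: when $W$ is only quadratic over $K$, the paper asserts that $(P)+(\iota P)-(W)$ (with $(W)$ the degree-2 closed point) is principal, but over $\ol{K}$ its class is the 2-torsion element $[(W')-(W)]\in\mathrm{Jac}(C)$, which is nonzero in genus 2; your function $(X-x(P))^2/N_{K(W)/K}(X-x(W))$, with divisor $2(P)+2(\iota P)-2(W)$, is the correct repair and yields exactly the relation the paper only reaches after its ``multiply by 2'' step. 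Finally, both you and the paper need a Weierstrass point of degree at most 2 over $K$; you rightly flag this as an additional hypothesis for a general smooth hyperelliptic curve, and it is satisfied by the Scholten curves since their sextics factor into quadratics $X^2-r$ over $K$.
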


\begin{proof}
  First let us suppose that $C$ has a $K$-rational Weierstrass point $W$. The $x$-coordinate map $x:C\ra\P^1$ is a degree 2 rational map on $C$ and clearly $x = x\circ\iota$. The pullback of the divisor $D=(x(P))-(x(W))\in\mathrm{Pic}(\P^1)$ along $x:C\ra\P^1$ is
  \[
    x^*(D) = (P) + (\iota P) - 2 (W).
  \]
  Since $\mathrm{Pic}(\P^1)=\Z$ via the degree map, then $D$ is principal, say $D=\div(g)$ for $g\in K(\P^1)$. Thus
  \[
    (P) + (\iota P) - 2 (W) = x^*(D) = x^*(\div(g))=\div(x^* g).
  \]
  That is, $x^*(D)$ is principal.
  
  If we write $f = x^* g$ then the Weil relation attached to $C$, $\s_1,\s_2$ and $f$ is
  \[
    \{\s_1(P),\s_2(P)\}_{K/K}+\{\s_1(\iota(P)),\s_2(\iota(P))\}_{K/K} - 2\{\s_1(W),\s_2(W)\}_{K/K}.
  \]
  By the second assumption, have $\s_i(\iota P)=-\s_i(P)$ and in particular $\s_i(W)$ is 2-torsion. Using the bilinearity of $K/K$-symbols one concludes that
  \[
    0=2\{\s_1(P),\s_2(P)\}_{K/K}.
  \]
  Multiplying by 2 yields the result.

  If $C$ contains no $K$-rational Weierstrass points, then any Weierstrass point $W$ will be defined over a quadratic extension $L$ of $K$. In which case the principal divisor we have is $(P)+(\iota P)-(W)$, which yields the WR relation
  \[
    2\{\s_1(P),\s_2(P)\}_{K/K}-\{\s_1(W),\s_2(W)\}_{L/K}.
  \]
  Multiplying by 2 yields
  \[
    0=4\{\s_1(P),\s_2(P)\}_{K/K}-2\{\s_1(W),\s_2(W)\}_{L/K}=4\{\s_1(P),\s_2(P)\}_{K/K}.
  \]
\end{proof}

Given two elliptic curves $E_{a,b}$ and $E_{c,d}$ with full 2-torsion, we want to apply Proposition \ref{prop:symbols_coming_from_scholten_curves} to the Scholten curve $C_{a,b,c,d}$, but it may be that $ad-bc=0$ or that $C_{a,b,c,d}$ may not have a $K$-rational Weierstrass point. However, the construction of $C_{a,b,c,d}$ is flexible enough to produce more than just the one span from Definition \ref{def:scholten-curve}. Furthermore, if both elliptic curves have supersingular reduction, then we can always construct a Scholten curve and span that satisfies the hypothesis of Proposition \ref{prop:symbols_coming_from_scholten_curves}. To check this, we verify that such elliptic curves over totally ramified fields can be written in Legendre form with certain conditions on its $j$-invariant.

\begin{lemma}\label{lemma:p-3mod4-and-legendre-form}
  Suppose $K$ is a totally ramified extension of $\Q_p$ with $p>3$. Let $E$ be an elliptic curve will full 2-torsion and supersingular reduction. Then
  \begin{enumerate}[label=(\roman*)]
    \item $p\equiv3\pmod{4}$, in particular $-1$ is not a square in $K^\times$.
    \item There exists $\l\in\O$ such that $\l\not\equiv 0,1\spmod{\m}$ and such that $E$ is isomorphic over $K$ to a Legendre form elliptic curve
    \[
      E_\l:y^2=x(x-1)(x-\l).
    \]
    \item The $j$-invariant $j(E)$ satisfies $j(E)\not\equiv 0\spmod{\m}$.
  \end{enumerate}
\end{lemma}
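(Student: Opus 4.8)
The plan is to establish the three parts in order, since~(ii) will use~(i) in an essential way. Since $K$ is totally ramified over $\Q_p$ its residue field is $k=\F_p$, and since $p>2$ the reduction map $E(K)\to\bar E(\F_p)$ is injective on $2$-torsion (its kernel is the formal group $\E(\m)$, which has no prime-to-$p$ torsion), so $\bar E$ inherits full $\F_p$-rational $2$-torsion; in particular $4\mid|\bar E(\F_p)|$. By Remark~\ref{rm:about-ssing-red}(2), $|\bar E(\F_p)|=p+1$ because $\bar E$ is supersingular and $p>3$, hence $p\equiv3\pmod{4}$. For the second assertion of~(i): $-1$ is a nonsquare in $\F_p^\times$ exactly when $p\equiv3\pmod{4}$, and a unit of $\O$ is a square in $K^\times$ iff its reduction is a square in $\F_p^\times$ (Hensel's lemma, $p$ odd), while $\nu(-1)=0$; so $-1$ is a nonsquare in $K^\times$.

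For~(ii) I would start from a minimal Weierstrass model $y^2=x^3+a_2x^2+a_4x+a_6$ with $a_i\in\O$; full $2$-torsion lets me factor the cubic as $(x-e_1)(x-e_2)(x-e_3)$, and integral closedness of $\O$ places the $e_i$ in $\O$. Good reduction makes the discriminant --- a unit times $\prod_{i<j}(e_i-e_j)^2$ --- a unit, so every $e_i-e_j\in\O^\times$. The crucial point, and the only place part~(i) enters, is that since $-1$ is a nonsquare, for every unit $v$ exactly one of $v,-v$ is a square in $K^\times$; applying this to $v=e_2-e_1$ and relabelling the roots if necessary, I may assume $e_2-e_1=u^2$ with $u\in\O^\times$. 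The substitution $x\mapsto u^2x+e_1$, $y\mapsto u^3y$ then produces a Legendre model $E_\l$ with $\l=(e_3-e_1)/(e_2-e_1)\in\O^\times$, and distinctness of $\bar e_1,\bar e_2,\bar e_3$ gives $\l\not\equiv0,1\pmod{\m}$.

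For~(iii), I would note that $j(E)$ lies in $\O$ and reduces to $j(\bar E)$, so it suffices to rule out $j(\bar E)=0$. If $j(\bar E)=0$ then $\bar E$ has a model $y^2=x^3+c$ over $\F_p$, whose $2$-torsion $x$-coordinates are the roots of $x^3+c$; full $\F_p$-rational $2$-torsion forces this cubic to split over $\F_p$, so $\F_p$ contains a primitive cube root of unity and $p\equiv1\pmod{3}$. But a curve $y^2=x^3+c$ over a field of characteristic $p\equiv1\pmod{3}$ is ordinary, contradicting supersingularity of $\bar E$. (Alternatively, using~(ii) and $j(E_\l)=2^8(\l^2-\l+1)^3/(\l^2(\l-1)^2)$, the vanishing $j(\bar E)=0$ would force $\bar\l^2-\bar\l+1=0$, i.e.\ $\bar\l$ a primitive sixth root of unity in $\F_p$, again forcing $p\equiv1\pmod{3}$.)

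I expect the main obstacle to be the observation in~(ii) that Legendre form can be reached \emph{over $K$}, not merely over a ramified quadratic extension: this rests precisely on part~(i), since the nonsquareness of $-1$ is exactly what guarantees that of $v$ and $-v$ one is always a square in $K^\times$. The remaining work --- choosing a minimal model, tracking integrality of the $e_i$ and of $\l$, and using good reduction so the relevant differences and the $j$-invariant are units --- is routine bookkeeping.
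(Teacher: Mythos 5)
Your proposal is correct and follows essentially the same route as the paper's proof: $4\mid p+1$ from full $2$-torsion plus the supersingular point count, the nonsquareness of $-1$ to pick a square among $\pm(e_i-e_j)$ and reach Legendre form over $K$, and the reduction of $j\equiv 0$ to $p\equiv 1\pmod 3$ contradicting supersingularity. Your primary argument for (iii) via the model $y^2=x^3+c$ is a harmless variant of the paper's computation with $j(E_\lambda)$, which you also give as an alternative.
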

\begin{proof}$\;$

  \begin{enumerate}[label=(\roman*)]
    \item Since $E$ has supersingular reduction and the residue field $k$ is $\F_p$ by the assumption on the ramification, we have that $|\ol{E}(k)|=p+1$ by Remark \ref{rm:about-ssing-red}. Since $E$ also has full 2-torsion, then $\ol{E}(k)[2]$ is an order 4 subgroup of $\ol{E}(k)$ and thus Lagrange's theorem tells us that $4\mid p+1$.
    
    \item Let $y^2=x^3+a_2x^2+a_4x+a_6$ be a minimal Weierstrass equation for $E$. Since $E$ has full 2-torsion, then the cubic in $x$ is of the form $(x-e_1)(x-e_2)(x-e_3)$ with $e_j\in K$. In fact, $e_i\in\O$ since they are roots of a monic polynomial with coefficients in the integrally closed ring $\O$. Thus the discriminant $\Delta_E$ of $E$ is
    \[
      \Delta_E = 16(e_1-e_2)^2(e_1-e_2)^2(e_2-e_3)^2.
    \]
    Since the Weierstrass equation is minimal and $E$ has good reduction, we have $\nu(\Delta_E)=0$, and since each $e_i$ is integral, then $\nu(e_i-e_j)\geq0$ so that $\nu(e_i-e_j)=0$ for $i\neq j$. That is $e_1-e_2,e_1-e_3$ and $e_2-e_3$ are all units.
    
    The first part tells us that $-1$ is not a square in $k=\F_p$ so either $e_1-e_2$ or $e_2-e_1$ is a square in $K$. Thus if we make the substitution $x=(e_1-e_2)X+e_2,y=(e_1-e_2)^{3/2}Y$, respectively $x=(e_2-e_1)X+e_1,y=(e_2-e_1)^{3/2}Y$, we obtain the Legendre form elliptic curve
    \[
      E_\l : Y^2 = X(X-1)(X-\l),\quad \l=\frac{e_3-e_2}{e_1-e_2},\;\text{resp.}\;\frac{e_3-e_1}{e_2-e_1}.
    \]
    Notice that the change of variable that was made is of the form $x=u^2X+r$, $y=u^3Y$ with $u$ a unit. Thus the discriminant of $E_\l$ satisfies:
    \[
      0=\nu(\Delta_E)=\nu(u^{12}\Delta_{E_\l})=\nu(\Delta_{E_\l})=\nu(16\l^2(1-\l)^2)=2\nu(\l)+2\nu(1-\l).
    \]
    Since $\l$ is integral, then we conclude that $\l\not\equiv0,1\spmod{\m}$ as required.

    \item Suppose that $j(E)\equiv0\spmod{\m}$. By the previous item, we have that $j(E)=j(E_\l)=2^8(\l^2-\l+1)^3\l^{-2}(1-\l)^{-2}$ for some unit $\l$. Since $\l\not\equiv 0,1\spmod{\m}$, then $\l^2-\l+1\equiv0\spmod{\m}$. Hence $\bar{\l}\in k$ is a solution to the equation $t^2-t+1$ in $k=\F_p$. Since $t^2-t+1\mid t^3+1$ and $\bar{\l}\neq -1$, then $\bar{\l}$ is a 6th root of unity in $\F_p^\times$. This implies that $6\mid p-1$ and in particular $p\equiv1\spmod{3}$. However, it is well known that for $p\equiv1\spmod{3}$ there are no elliptic curves over $\F_p$ with $j$-invariant 0 \cite[V.4.4]{Silverman2013}. Thus $j(E)\not\equiv 0\spmod{\m}$.
  \end{enumerate}
\end{proof}

The above lemma tells us that if we have elliptic curves $E_1$ and $E_2$ over a totally ramified field $K$ with full 2-torsion and supersingular reduction, then we may assume that $E_1=E_\l$ and $E_2=E_\mu$ for some $\l,\mu\in\O$. The Legendre form of an elliptic curve is not unique up to isomorphism, so choosing a different form will produce a different genus 2 curve.

Observe that for a general elliptic curve $E_{a,b}$ with full 2-torsion, we have that
\[
  E_{a,b}=E_{b,a}\cong E_{-a,b-a}=E_{b-a,-a}\cong E_{a-b,-b}=E_{-b,a-b},
\]
where the first isomorphism is $(x,y)\mapsto(x+a,y)$ and the second is $(x,y)\mapsto (x+b,y)$. Thus, given two elliptic curves $E_{a,b}$ and $E_{c,d}$, the six scholten curves
\begin{equation}\label{eq:six-hyperelliptic-curves}
  C_{a,b,c,d},C_{b,a,c,d},C_{-a,b-a,c,d},C_{b-a,-a,c,d},C_{a-b,-b,c,d},C_{-b,a-b,c,d}
\end{equation}
all span isomorphic elliptic curves. It is worth pointing out that the six hyperelliptic curves in \eqref{eq:six-hyperelliptic-curves} are pairwise non-isomorphic \cite[\S4.2.2]{GazakiLove2024}.

So if we postcompose these spans with the above isomorphisms we obtain six spans from $E_{a,b}$ to $E_{c,d}$:
\[
  \begin{tikzcd}
    E_{a,b} & C \arrow[l,"\sigma_1"'] \arrow[r,"\sigma_2"] & E_{c,d}
  \end{tikzcd}
\]
The formulas for the six Scholten curves and their associated maps are easily computed. In table \ref{table:scholten_maps-condensed} we list these formulas for the case when the elliptic curves are given in Legendre form: $E_{a,b}=E_{1,\l}$ and $E_{c,d}=E_{1,\mu}$. We will see that at least one of these spans will produce a Weil Reciprocity relation $\{P_1,P_2\}_{K/K}=0$ with diagonal signature.

\begin{table}
  \caption{Formulas for the six spans $E_{1,\l}\xleftarrow{\s_1}C\xrightarrow{\s_2}E_{1,\mu}$ where $C$ runs through the six hyperelliptic curves of \eqref{eq:six-hyperelliptic-curves}.}
  \label{table:scholten_maps-condensed}
  \begin{center}
    {\renewcommand{\arraystretch}{1}
    \begin{tabular}{|c|c|c|c|c|c|c|}
      \hline
      \multicolumn{7}{|c|}{\rule{0pt}{8ex} $\begin{gathered} C:y^2 = u(x^2-r)(x^2-s)(x^2-t)\\ \sigma_1:C\ra E_{1,\l},\quad\sigma_1(x,y)=\left(u(x^2-v),uy\right)\\ \sigma_2:C\ra E_{1,\mu},\quad\sigma_2(x,y)=\left(ust(x^2-r)x^{-2},-urstx^{-3}y\right)\end{gathered}$} \\[4ex]
      \hline
      $C$ & $\delta=ad-bc$ & $u$ & $r$ & $s$ & $t$ & $v$\\
      \hline
      $C_1=C_{1,\l,1,\mu}$ & $\mu-\l$ & $\dfrac{\l(1-\l)}{\mu-\l}$ & $\dfrac{1-\mu}{1-\l}$ & $1$ & $\dfrac{\mu}{\l}$ & $\dfrac{1-\mu}{1-\l}$ \\[2ex]
      \hline
      $C_2=C_{\l,1,1,\mu}$ & $1-\l\mu$ & $\dfrac{\l(1-\l)}{1-\l\mu}$ & $-\dfrac{1-\mu}{1-\l}$ & $\dfrac{1}{\l}$ & $\mu$ & $-\dfrac{1-\mu}{1-\l}$ \\[2ex]
      \hline
      $C_3=C_{-1,\l-1,1,\mu}$ & $\mu-1+\l$ & $\dfrac{\l(1-\l)}{\mu-1+\l}$ & $-\dfrac{1-\mu}{\l}$ & $-\dfrac{\mu}{1-\l}$ & $-1$ & $-\dfrac{\mu}{1-\l}$\\[2ex]
      \hline
      $C_4=C_{\l-1,-1,1,\mu}$ & $1-\mu+\l\mu$ & $\dfrac{\l(1-\l)}{1-\mu+\l\mu}$ & $\dfrac{1-\mu}{\l}$ & $-\dfrac{1}{1-\l}$ & $-\mu$ & $-\dfrac{1}{1-\l}$ \\[2ex]
      \hline
      $C_5=C_{1-\l,-\l,1,\mu}$ & $\l\mu-\mu-\l$ & $\dfrac{\l(1-\l)}{\l\mu-\mu-\l}$ & $1-\mu$ & $\dfrac{1}{1-\l}$ & $-\dfrac{\mu}{\l}$ & $\dfrac{1}{1-\l}$ \\[2ex]
      \hline
      $C_6=C_{-\l,1-\l,1,\mu}$ & $\l-1-\l\mu$ & $\dfrac{\l(1-\l)}{\l-1-\l\mu}$ & $\mu-1$ & $\dfrac{\mu}{1-\l}$ & $-\dfrac{1}{\l}$ & $\dfrac{\mu}{1-\l}$\\[2ex]
      \hline
    \end{tabular}
    }
  \end{center}
\end{table}

\begin{lemma}\label{lemma:existence-of-good-span}
  Let $K/\Q_p$ be a totally ramified extension with $p>3$. Let $E_1$ and $E_2$ be elliptic curves over $K$ with full 2-torsion and supersingular reduction. Then there exists a smooth genus 2 curve $C$ that spans $E_1$ and $E_2$. The curve $C$ has defining equation of the form
  \[
    C:Y^2=u(X^2-r)(X^2-s)(X^2-t),
  \]
  where $u,r,s,t\in\O^\times$, and the spanning maps $\s_1:C\ra E_1$ and $\s_2:C\ra E_2$ have the following form
  \[
    \s_1(X,Y)=(u(X^2-v),uY),\quad \s_2(X,Y)=(ust(X^2-r)X^{-2},-urst X^{-3}Y),
  \]
  where $v$ is a unit. Furthermore, $C$ has a $K$-rational Weierstrass point $W=(w,0)$ where $w\in \O^\times$.
\end{lemma}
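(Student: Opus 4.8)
The plan is to build $C$ by selecting, according to the reduction data of $E_1$ and $E_2$, one of the six Scholten curves $C_1,\dots,C_6$ of Table~\ref{table:scholten_maps-condensed} and checking the listed properties directly. First I would invoke Lemma~\ref{lemma:p-3mod4-and-legendre-form} to assume $E_1=E_\l$ and $E_2=E_\mu$ with $\l,\mu\in\O$ and $\l,\mu\not\equiv0,1\spmod{\m}$. This already records that $\l,1-\l,\mu,1-\mu$ are units, that $p\equiv3\spmod{4}$ --- so $-1$ is a nonsquare in $K^\times$ and the quadratic character $\chi$ of $\F_p^\times$ satisfies $\chi(-1)=-1$ --- and that $\l^2-\l+1$ and $\mu^2-\mu+1$ are units, the $j$-invariants being units.

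Second, I would isolate what must be verified for a candidate $C_i\colon Y^2=u_i(X^2-r_i)(X^2-s_i)(X^2-t_i)$. Reading off the table, each of $r_i,s_i,t_i$ and $v_i$ is manifestly a unit (a monomial in $\l,1-\l,\mu,1-\mu$ and signs), while $u_i=\l(1-\l)/\delta_i$ is a unit exactly when $\delta_i=ad-bc$ is one; moreover, by Remark~\ref{rm:C-is-smooth} the single condition $\delta_i\neq0$ already makes $C_i$ smooth of genus $2$, spanning $E_\l$ and $E_\mu$ via maps $\s_1,\s_2$ of the prescribed shape. The six Weierstrass points of $C_i$ are the affine points $(\pm\sqrt{r_i},0),(\pm\sqrt{s_i},0),(\pm\sqrt{t_i},0)$, since the two points at infinity (where the leading coefficient $u_i$ governs the fibre) are interchanged by $\iota$; hence $C_i$ has a $K$-rational Weierstrass point with unit $X$-coordinate precisely when one of $r_i,s_i,t_i$ is a square in $K^\times$, equivalently --- $p$ being odd, by Hensel's lemma --- when one of $\bar r_i,\bar s_i,\bar t_i$ is a square in $\F_p^\times$. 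So the whole lemma reduces to exhibiting an index $i$ with $\delta_i$ a unit and one of $\bar r_i,\bar s_i,\bar t_i$ a quadratic residue.

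Finally I would split into two cases. If $\bar\l\neq\bar\mu$, take $C=C_1$: then $\delta_1=\mu-\l$ is a unit and $s_1=1$ is a square, so $W=(1,0)$ works. The main case --- and the step I expect to carry the real content --- is $\bar\l=\bar\mu=:c$, where $C_1$ degenerates and one must pass to a sibling curve whose Weierstrass points are not visibly rational. I would take $C=C_4=C_{\l-1,-1,1,\mu}$: since $c^2-c+1\neq0$ (as $j\not\equiv0$), the quantity $\delta_4=1-\mu+\l\mu\equiv c^2-c+1$ is a unit, and with $P=\chi(c-1)$ and $Q=\chi(c)$ a short computation using $\chi(-1)=-1$ gives $\chi(\bar r_4)=-PQ$, $\chi(\bar s_4)=P$, $\chi(\bar t_4)=-Q$ (here $\bar r_4=(1-c)/c$, $\bar s_4=-1/(1-c)$, $\bar t_4=-c$). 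Thus $C_4$ acquires a $K$-rational Weierstrass point as soon as $P=1$ or $Q=-1$ or $PQ=-1$, and this disjunction holds for all $P,Q\in\{\pm1\}$ (if $P=-1$ and $Q=1$ then $PQ=-1$); taking the corresponding root yields the desired $w\in\O^\times$. The one genuinely delicate point is exactly this last implication: it is the hypothesis $p\equiv3\spmod{4}$, i.e. $\chi(-1)=-1$, that makes the disjunction tautological --- with $\chi(-1)=+1$ it would read $P=1$ or $Q=1$ or $PQ=1$ and fail at $P=Q=-1$, in which case one would instead appeal to $C_6$, whose analysis is word-for-word the same. Everything else is routine bookkeeping against the table.
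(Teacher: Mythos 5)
Your proof is correct, and it follows the same overall strategy as the paper's --- normalize to Legendre form via Lemma \ref{lemma:p-3mod4-and-legendre-form}, then select one of the six curves of Table \ref{table:scholten_maps-condensed} according to congruence conditions on $\bar{\l},\bar{\mu}$, using that $-1$ is a nonsquare and $j\not\equiv0\spmod{\m}$ --- but your case analysis is genuinely different and noticeably shorter. When $\bar{\l}\neq\bar{\mu}$ you exploit the fact that $s_1=1$ is trivially a square, so the smooth curve $C_1$ already carries the rational Weierstrass point $(1,0)$; the paper instead branches on whether $(1-\mu)/(1-\l)$ is a square and, when it is not, must pass to $C_2$ and then (when $\d_2\equiv0$) to $C_3$ or $C_5$ --- subcases your observation eliminates entirely. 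When $\bar{\l}=\bar{\mu}=c$ you use the single curve $C_4$, smooth because $\d_4\equiv c^2-c+1\not\equiv0$ by the $j$-invariant condition, and your computation of the quadratic character values $\bigl(\chi(\bar{r}_4),\chi(\bar{s}_4),\chi(\bar{t}_4)\bigr)=(-PQ,\,P,\,-Q)$ is right; the resulting disjunction ($P=1$ or $Q=-1$ or $PQ=-1$) is indeed exhausted by the four sign patterns, precisely because $\chi(-1)=-1$. The paper instead splits on $\d_3\equiv0$ and distributes the work among $C_3,C_4,C_5,C_6$. Both arguments are valid; yours has the merit of isolating exactly where the hypotheses $p\equiv3\spmod{4}$ and $j\not\equiv0$ enter, and of using only two of the six curves. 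The supporting reductions you make along the way (the Weierstrass points are the six roots of the sextic since the two points at infinity are swapped by $\iota$; a unit of $\O$ is a square in $K^\times$ if and only if its reduction is a square in $\F_p^\times$, by Hensel since $p$ is odd) are all correct and match what the paper uses implicitly.
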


\begin{proof}
  By Lemma \ref{lemma:p-3mod4-and-legendre-form}, we may write $E_1=E_\l$ and $E_2=E_\mu$ for some $\l,\mu\in \O$ such that $\l,\mu,1-\l$ and $1-\mu$ are all units. Furthermore, $p\equiv3\spmod{4}$ so that $-1$ is not a square in $K^\times$. The curve we are looking for will be taken from Table \ref{table:scholten_maps-condensed}. For simplicity, we label all the curves and constants in the table with a subindex $i$ that indicates the row of the table. For example $C_2$ will have the equation $y^2=u_2(x^2-r_2)(x^2-s_2)(x^2-t_2)$, etc.
  
  There are several cases in this proof, but they all follow the same structure. We will show that a curve $C_i$ in Table \ref{table:scholten_maps-condensed} is smooth by showing that $\delta_i=ad-bc\not\equiv 0\spmod{\m}$ (see Remark \ref{rm:C-is-smooth}). Next, we show that either $r_i$ or $s_i$ is a square in $K^\times$, which implies that the corresponding smooth curve will have a $K$-rational point of the form $(w,0)$ where $w=\sqrt{r_i}$ or $w=\sqrt{s_i}$. All the $r$, $s$ and $t$ in the table are units since they are ratios of the units $\l,\mu,1-\l$ and $1-\mu$. Thus $w$ is also a unit.
  
  We proceed to the proof. First we split into two cases: $\l\not\equiv\mu\spmod{\m}$ and $\l\equiv\mu\spmod{\m}$. We drop the ``$\spmod{\m}$'' from the notation for the rest of the proof.

  \begin{enumerate}[label=\arabic*.]
    \item Suppose $\l\not\equiv\mu$. In this case, $\d_1\equiv\mu-\l\not\equiv 0$ for $C_1$ so it is smooth. If $s_1=(1-\mu)/(1-\l)$ is a square, then $C_1$ works. If $s_1$ is not a square, then $s_2=-s_1=-(1-\mu)/(1-\l)$ is a square, say $\eta^2$.
    \begin{enumerate}[label*=\arabic*.]
      \item If $\d_2=1-\mu\l\not\equiv0$, then $C_2$ is smooth and has the Weierstrass point $(\eta,0)$.
      \item If $\d_2\equiv0$, then $\mu\equiv1/\l$ and thus $\eta^2\equiv-(1-\l^{-1})/(1-\l)$. Expanding this congruence out implies that $\l$ satisfies the quadratic equation $0\equiv(\l-1)(\l-\eta^{-2})$. Since $\l\not\equiv1$, then $\l\equiv\eta^{-2}$ and $\mu\equiv\eta^2$ are both squares. Next observe that
      \begin{gather*}
        0\equiv\d_3=\mu-1+\l \quad\Lra\quad 0\equiv\l\mu-\l+\l^2\equiv1-\l+\l^2,\\
        0\equiv\d_5=\l\mu-\mu-\l\quad\Lra\quad 0\equiv\l^2\mu-\l\mu-\l^2\equiv\l-1-\l^2,
      \end{gather*}
      Notice that $1-\l+\l^2\not\equiv 0$ since otherwise $j(E_\l)=2^8(\l^2-\l+1)^3\l^{-2}(1-\l)^{-2}\equiv0$ which is not the case by Lemma \ref{lemma:p-3mod4-and-legendre-form}. Thus necessarily $\d_3,\d_5\not\equiv0$, that is $C_3$ and $C_5$ are smooth. If $1-\l$ is a square, then so is $s_5=(1-\l)^{-1}$, and thus $C_5$ has a Weierstrass point. If $1-\l$ is not a square, then $s_3=-\mu/(1-\l)$ is a square since $\mu\equiv\eta^2$ is a square. Thus $C_3$ has a Weierstrass point.
    \end{enumerate}
    \item Suppose $\l\equiv\mu$.
    \begin{enumerate}[label*=\arabic*.]
      \item If $\d_3=\mu-1+\l\equiv0$, then $2\l\equiv1$. First, we have that $C_4$ is smooth since otherwise we would have
      \[
        0\equiv\d_4=1-\mu+\l\mu\equiv 1-\l+\l^2 \quad\Lra\quad j(E_\l)\equiv0,
      \]
      which contradicts Lemma \ref{lemma:p-3mod4-and-legendre-form}. Next we also have that $C_5$ is smooth. Indeed, if
      \[
        0\equiv\d_5=\l\mu-\mu-\l\equiv \l^2-2\l\equiv\l^2-1,
      \]
      then $\l\equiv-1$ since $\l\not\equiv1$ by Lemma \ref{lemma:p-3mod4-and-legendre-form}. However, this would imply that $1\equiv2\l\equiv-2$ which contradicts the fact that $p\neq 3$. Now, if $s_5=1/(1-\l)$ is a square, then $C_5$ has a Weierstrass point. Otherwise, $s_4=-s_5$ is a square so that $C_4$ works.
      \item If $\d_3\not\equiv0$, then $C_3$ is smooth. Furthermore, if $s_3=-\mu/(1-\l)$ is a square, then $C_3$ has a Weierstrass point. If $s_3$ is not a square, then $s_6=-s_3$ is a square. Observe that $\d_6=\l-1-\l\mu\equiv-(\l^2-\l+1)\not\equiv 0$ since $j(E_\l)\not\equiv0$. Thus $C_6$ is smooth and has a Weierstrass point.
    \end{enumerate}
  \end{enumerate}
\end{proof}

\begin{remark}\label{rm:defining-poly-is-separable}
  As a consequence of the proof of Lemma \ref{lemma:existence-of-good-span}, we see that the degree six polynomial $F(X)\in\O[X]$ has separable reduction. More precisely, if the curve $C$ given to us by Lemma \ref{lemma:existence-of-good-span} is defined by $Y^2=F(X)$, with $F\in\O[X]$, then the reduced polynomial $\ol{F}\in k[X]$ is separable since the resultant of any two of its quadratic factors is a unit by Remark \ref{rm:C-is-smooth}.
\end{remark}

\subsection{Diagonal Signatures}\label{sec:diagonal-symb}

We have shown in Lemma \ref{lemma:existence-of-good-span} that given a pair of elliptic curves $E_1$ and $E_2$ with full 2-torsion and supersingular reduction, we can produce a span $E_1\leftarrow C\rightarrow E_2$ to which to we can apply Proposition \ref{prop:symbols_coming_from_scholten_curves}. Indeed, one immediately sees from the $y$-coordinate of $\s_1(x,y)$ and $\s_2(x,y)$, and the formula for the hyperelliptic involution, that $\s_i(P)=-\s_i(\iota P)$.

This produces $K/K$-symbols $\{P_1,P_2\}_{K/K}$ in $K(K;E_1,E_2)$. The last step is to compute the signatures of these symbols. This is done by multiplying the entries $P_1$ and $P_2$ by integers (cf. Definition \ref{def:associated-formal-point}).

The next lemma gives us a way to determine how deep into the filtration a point $P_i$ goes when multiplied by a certain integer. Roughly speaking, if a $K$-rational point $P$ on any elliptic curve $E$ is $\pi$-adically close to an $n$-torsion point, with $n$ coprime to $p$, then $nP$ will be $\pi$-adically close to the identity, i.e. it will lie in the formal group. Furthermore, the closer the point is to the torsion point, the deeper $nP$ will be in the filtration of $\wh{E}(\m)$.

\begin{lemma}\label{lemma:key-lemma}
  Let $E$ be an elliptic curve defined over $K$ with good reduction, $n$ coprime to $p$, and $N>0$. Let $P\in E(K)$. Then the following are equivalent.
  \begin{enumerate}[label=(\roman*)]
    \item\label{item:nP-in-Ehat-N} $nP\in\E(\m^N)\setminus\E(\m^{N+1})$,
    \item\label{item:congruent-to-n-torsion-mod-Ehat-N} There exists $T\in E(K)[n]$ such that $P-T\in\E(\m^N)\setminus\E(\m^{N+1})$.
  \end{enumerate}
\end{lemma}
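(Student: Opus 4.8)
The plan is to reduce everything to two standard facts about the formal group, combined with the reduction sequence \eqref{eq:reduction-sequence}: since $n$ is prime to $p$, multiplication-by-$n$ is an automorphism of $\E(\m)$ (its power series has unit linear coefficient), and it preserves valuations exactly, $\nu(nQ)=\nu(Q)$ for every $Q\in\E(\m)$ --- this last identity is the computation $[n](T)=nT+(\text{order}\geq 2)$ with $n\in\O^\times$ already carried out in the proof of Proposition \ref{prop:properties-of-signature}. Granting these, the direction \ref{item:congruent-to-n-torsion-mod-Ehat-N} $\Rightarrow$ \ref{item:nP-in-Ehat-N} is a one-liner: if $T\in E(K)[n]$ and $P-T\in\E(\m^N)\setminus\E(\m^{N+1})$, then $nP=n(P-T)$ since $nT=O$, and hence $\nu(nP)=\nu(n(P-T))=\nu(P-T)=N$.

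For the converse \ref{item:nP-in-Ehat-N} $\Rightarrow$ \ref{item:congruent-to-n-torsion-mod-Ehat-N} the real content is to build a $K$-rational torsion point $T$ with $\ol{T}=\ol{P}$, after which the distance estimate is automatic. I would argue as follows. From $nP\in\E(\m)$ we get $n\ol{P}=O$ in $\ol{E}(k)$, so if $m$ is the order of $\ol{P}$ then $m\mid n$; in particular $m$ is prime to $p$. Since $\ol{mP}=m\ol{P}=O$, the point $mP$ lies in $\E(\m)$, and because $[m]$ is an automorphism of $\E(\m)$ there is a unique $S\in\E(\m)$ with $mS=mP$. Setting $T:=P-S$ gives $mT=mP-mS=O$, so $T\in E(K)[m]\subseteq E(K)[n]$, while $P-T=S\in\E(\m)$. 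Finally $nT=O$ (as $m\mid n$), so $nS=nP\in\E(\m^N)\setminus\E(\m^{N+1})$, and valuation-preservation of $[n]$ forces $\nu(S)=N$, i.e.\ $P-T=S\in\E(\m^N)\setminus\E(\m^{N+1})$.

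The step I expect to need the most care is the construction of $T$. One cannot simply lift $\ol{P}$ to an $n$-torsion point of $E(K)$, because the reduction map $E(K)[n]\to\ol{E}(k)[n]$ is generally not surjective over $K$ itself; the point is to use that $\ol{P}$ already \emph{is} the reduction of the $K$-point $P$ and to peel off its ``formal part'' via the inverse of $[m]$ on $\E(\m)$. This is exactly where $m\mid n$ --- hence invertibility of $[m]$ on the formal group --- enters. Everything else is bookkeeping with \eqref{eq:reduction-sequence} and the identity $\nu(nQ)=\nu(Q)$.
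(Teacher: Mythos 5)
Your proof is correct, and it reaches the key point --- that multiplication by an integer prime to $p$ is a valuation-preserving automorphism of $\E(\m)$ --- by a slightly different route than the paper. The paper first proves the weaker equivalence ``$nP\in\E(\m^N)\iff\exists\,T\in E(K)[n]$ with $P-T\in\E(\m^N)$'' and constructs $T$ by applying the Snake Lemma to the reduction sequence \eqref{eq:reduction-sequence} with vertical maps $[n]$: since $[n]$ is an automorphism of $\E(\m)$, this yields an isomorphism $E(K)[n]\xrightarrow{\sim}\ol{E}(k)[n]$, and $T$ is taken to be the lift of $\ol{P}$. The exact-valuation statement is then recovered in a second pass. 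You instead build $T=P-S$ directly, where $S$ is the preimage of $mP$ under the automorphism $[m]$ of $\E(\m)$ (with $m$ the order of $\ol{P}$), and you get the exact valuation in one step from $\nu(nQ)=\nu(Q)$; this is a little more economical. One side remark in your write-up is wrong, though it does not affect your argument: you claim that the reduction map $E(K)[n]\to\ol{E}(k)[n]$ is ``generally not surjective over $K$ itself.'' For $n$ coprime to $p$ and $E$ with good reduction over a $p$-adic field this map is in fact a bijection --- this is exactly the Snake Lemma computation above, using that \eqref{eq:reduction-sequence} is exact on the right --- and it is precisely the lifting step the paper performs. So the obstruction you were guarding against does not exist here; your workaround is valid but unnecessary.
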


\begin{proof}
  First we show that
  \begin{equation}\label{eq:prelim-implication}
    nP\in\E(\m^N) \quad\iff\quad \text{there exists}\; T\in E(K)[n]\;\text{such that}\; P-T\in\E(\m^N),
  \end{equation}
  and then deduce the full Lemma. The forward implication of \eqref{eq:prelim-implication} is immediate since $n(P-T)=nP$. For the other implication, first we prove it when $N=1$, then derive the case $N>1$.

  Consider the following commutative diagram
  \[
    \begin{tikzcd}
      0 \arrow[r] & \E(\m) \arrow[r] \arrow[d,"\lbrack n\rbrack"] & E(K) \arrow[r] \arrow[d,"\lbrack n\rbrack"]  & \ol{E}(k) \arrow[r] \arrow[d,"\lbrack n\rbrack"]  & 0 \\
      0 \arrow[r] & \E(\m) \arrow[r] & E(K) \arrow[r] & \ol{E}(k) \arrow[r] & 0 
    \end{tikzcd}
  \]
  Since multiplication-by-$n$ in $\E(\m)$ is an isomorphism (since $n$ is coprime to $p$), then the Snake Lemma tells us that the reduction map induces an isomorphism $E(K)[n]\xrightarrow{\sim}\ol{E}(k)[n]$. Now, since $nP\in\E(\m)$, then $\ol{nP}=O\in\ol{E}(k)$ and in particular $\ol{P}\in\ol{E}(k)[n]$. Thus the above isomorphism tells us that there exists $T\in E(K)[n]$ that reduces to $\ol{P}$, i.e. $P-T\in\E(\m)$.

  Now suppose $N>1$. Since $\E(\m^N)\subseteq\E(\m)$, the $N=1$ case gives us a point $T'\in E(K)[n]$ such that $Q:=P-T\in \E(\m)$. In particular $nQ=n(P-T)=nP\in\E(\m^N)$. Since $n\in\O^\times$, then $\nu(Q)=\nu(nQ)\geq N$ by Proposition \ref{prop:properties-of-signature}, and therefore $P-T\in\E(\m^N)$ as required. This proves \eqref{eq:prelim-implication}.

  Now, we show \ref{item:nP-in-Ehat-N}$\Lra$\ref{item:congruent-to-n-torsion-mod-Ehat-N}. By \eqref{eq:prelim-implication} there is a $T\in E(K)[n]$ such that $P-T\in\E(\m^{N})$. If $P-T\in\E(\m^{N+1})$, then \eqref{eq:prelim-implication} would imply that $nP\in\E(\m^{N+1})$, which is not true. Thus $P-T\not\in\E(\m^{N+1})$ as required.

  For the reverse implication, suppose there exists $T\in E(K)[n]$ such that $P-T\in\E(\m^N)\setminus\E(\m^{N+1})$. By \eqref{eq:prelim-implication}, $nP\in\E(\m^N)$. If $nP\in\E(\m^{N+1})$, then \eqref{eq:prelim-implication} would imply that there exists $T'\in E(K)[n]$ such that $P-T'=(P-T)-(T'-T)\in\E(\m^{N+1})$ and hence \eqref{eq:prelim-implication} applied to $P-T'$ would imply that $n(P-T')=nP\in\E(\m^{N+1})$, a contradiction. Thus $nP\not\in\E(\m^{N+1})$ as required.
\end{proof}

We are now in a position to prove Theorem \ref{thm:main-theorem-intro} from the introduction.

\begin{theorem}\label{thm:main-theorem}
  Let $K$ be a totally ramified extension of $\Q_p$, with $p>3$. Let $E_1$ and $E_2$ be elliptic curves with full 2-torsion and supersingular reduction. Then for every positive integer $N$, there exist points $P_N\in \E_1(K)$ and $Q_N\in \E_2(K)$ such that the $K/K$-symbol $\{P_N,Q_N\}_{K/K}\in K(K;E_1,E_2)$ has signature $(N,N)$ and such that
  \[
    \{P_N,Q_N\}_{K/K}\equiv 0\pmod{p}.
  \]
\end{theorem}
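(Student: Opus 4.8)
The plan is to run the Weil reciprocity mechanism of Proposition~\ref{prop:symbols_coming_from_scholten_curves} on the span $E_1\xleftarrow{\s_1}C\xrightarrow{\s_2}E_2$ produced by Lemma~\ref{lemma:existence-of-good-span}, evaluated at a point $P\in C(K)$ lying $\pi$-adically deep in the residue disk of the $K$-rational Weierstrass point $W=(w,0)$. Write $C:Y^2=F(X):=u(X^2-r)(X^2-s)(X^2-t)$ with $u,r,s,t\in\O^\times$ and, relabelling $r,s,t$ if necessary, $w^2=r$. Then the images $\s_1(P),\s_2(P)$ will be close to the $2$-torsion points $\s_1(W),\s_2(W)$, and the depth of this proximity will control the signature.

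First I would construct, for each integer $N\ge1$, a point $P=(x_0,y_0)\in C(K)$ with $\nu(y_0)=N$ that is not a Weierstrass point. Put $x_0=w+\pi^{2N}\tau$ for a unit $\tau$ to be chosen. Then $x_0^2-r=\pi^{2N}\tau(2w+\pi^{2N}\tau)$ has valuation exactly $2N$ (as $p\ne2$), while $x_0^2-s$ and $x_0^2-t$ remain units because $r,s,t$ are pairwise distinct modulo $\m$, i.e. because $C$ is smooth (Remark~\ref{rm:defining-poly-is-separable}). Hence $F(x_0)=\pi^{2N}U$ with $U\in\O^\times$, and $\overline U\in\F_p^\times$ equals $\overline\tau$ times a fixed nonzero constant. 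Since the squares in $\F_p^\times$ form an index-$2$ subgroup, I can choose $\overline\tau$ so that $\overline U$ is a square; Hensel's lemma then makes $U$ a square in $\O^\times$, and $P:=(x_0,\pi^N\sqrt U)\in C(K)$ works.

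Next I would apply Proposition~\ref{prop:symbols_coming_from_scholten_curves}. Since $\s_i(\iota P)=-\s_i(P)$ (immediate from the $y$-coordinates of $\s_1,\s_2$) and $C$ has the $K$-rational Weierstrass point $W$, that proposition gives $2\{\s_1(P),\s_2(P)\}_{K/K}=0$. Set $P_N:=2\s_1(P)$ and $Q_N:=2\s_2(P)$. Both $\s_i(P)$ and $\s_i(W)$ have integral coordinates and, since $\overline{x_0}=\overline w$ and $\overline{y_0}=0$, reduce to the same nonzero $2$-torsion point of $\overline{E_i}$; thus $\s_i(P)$ lies in the residue disk of the $2$-torsion point $\s_i(W)$, and in particular $P_N,Q_N\in\E_i(\m)$. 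Moreover $\{P_N,Q_N\}_{K/K}=4\{\s_1(P),\s_2(P)\}_{K/K}=0$, so a fortiori $\{P_N,Q_N\}_{K/K}\equiv0\pmod p$. For the signature: the $y$-coordinate of $\s_1$ is $uy$ and that of $\s_2$ is $-urst\,x^{-3}y$, so $\nu\big(y(\s_i(P))\big)=\nu(y_0)=N$; the key point, established below, is that this forces $\s_i(P)-\s_i(W)\in\E_i(\m^N)\setminus\E_i(\m^{N+1})$. Granting it, Lemma~\ref{lemma:key-lemma} with $n=2$ (coprime to $p$) gives $P_N=2\s_i(P)\in\E_i(\m^N)\setminus\E_i(\m^{N+1})$; since the associated formal point $\wh{P_N}$ is $P_N$ times an integer coprime to $p$, the valuation computation in Proposition~\ref{prop:properties-of-signature} gives $\nu(\wh{P_N})=N$, and likewise $\nu(\wh{Q_N})=N$, so $\sS(\{P_N,Q_N\}_{K/K})=(N,N)$.

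The main obstacle is the key point above: turning $\pi$-adic closeness of $x_0$ to $w$ into depth in the formal-group filtration. I would isolate it as a short lemma: on a good-reduction elliptic curve $y^2=(x-a)(x-b)(x-c)$, if $R$ lies in the residue disk of the $2$-torsion point $T=(a,0)$, then $R-T\in\E(\m^{\nu(y_R)})\setminus\E(\m^{\nu(y_R)+1})$. This is a direct chord-and-tangent computation: $y_R^2=(x_R-a)\cdot(\text{unit})$ forces $\nu(x_R-a)=2\nu(y_R)$; the line through $R$ and $T$ has slope of valuation $-\nu(y_R)$; and the addition formula then yields $\nu\big(x(R-T)\big)=-2\nu(y_R)$ and $\nu\big(y(R-T)\big)=-3\nu(y_R)$, so the formal parameter $-x/y$ of $R-T$ has valuation exactly $\nu(y_R)$. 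Applying this with $R=\s_i(P)$, $T=\s_i(W)$ and $\nu\big(y(\s_i(P))\big)=N$ finishes the argument. (When $e<p-1$ and $N>e$ the symbol $\{P_N,Q_N\}_{K/K}$ vanishes modulo $p$ for trivial reasons, but the construction still outputs formal points of valuation exactly $N$, so the statement holds for every $N$.)
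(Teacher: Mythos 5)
Your proposal is correct and follows essentially the same route as the paper: the span of Lemma \ref{lemma:existence-of-good-span}, a non-Weierstrass point of $C(K)$ at distance $\pi^{2N}$ from the rational Weierstrass point $W$, the chord computation showing $\s_i(P)-\s_i(W)$ lies in $\E_i(\m^N)\setminus\E_i(\m^{N+1})$, and then Lemma \ref{lemma:key-lemma} together with Proposition \ref{prop:symbols_coming_from_scholten_curves} and bilinearity. The only cosmetic difference is in how the point is produced --- you fix $x_0=w+\pi^{2N}\tau$ and adjust $\ol{\tau}$ so that $F(x_0)/\pi^{2N}$ becomes a square, whereas the paper fixes the $y$-coordinate to be $\pi^{N}$ and Hensel-lifts the $x$-coordinate --- and in your packaging of the valuation computation as a standalone lemma about residue disks of $2$-torsion points rather than doing it inline for $\s_1$ and $\s_2$.
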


\begin{proof}
  By Lemma \ref{lemma:existence-of-good-span} there is a smooth genus 2 curve $C$ that spans $E_1$ and $E_2$. The defining equation of $C$ is of the form $y^2=F(x)$ where
  \[
    F(X)=u\left(X^2 - r\right)\left(X^2-s\right)\left(X^2-t\right)\in\O[X].
  \]
  The spanning maps $\s_1:C\ra E_1$ and $\s_2:C\ra E_2$ are of the form
  \[
    \s_1(X,y)=(u(X^2-v),uY),\quad \s_2(X,Y)=(ust(X^2-r)X^{-2},-urst X^{-3}Y),
  \]
  where all the constants $u,r,s,t$ and $v$ are units. Finally, Lemma \ref{lemma:existence-of-good-span} tells us that $C$ has a $K$-rational Weierstrass point $W=(w,0)$ with $w\in\O^\times$.

  Since $\s_i(W)$ is 2-torsion in $E_i$, Lemma \ref{lemma:key-lemma} suggests to find a point $P\in C(K)$ that is $\pi$-adically close to $W$. We do this with Hensel's Lemma. For the given $N>0$, consider the polynomial
  \[
    f(X):=\pi^{2N}-F(X)\in\O[X].
  \]
  Notice that $f(w)=\pi^{2N}-F(w)=\pi^{2N}\equiv0\spmod{\pi^{2N}}$ since $W=(w,0)$ satisfies the equation of $C$. That is, $w$ is an approximate root of $f(w)$. Since $\ol{F}\in k[X]$ is separable (cf. Remark \ref{rm:defining-poly-is-separable}), then $F'(w)\not\equiv0\spmod{\pi}$. Thus Hensel's Lemma applies, that is, there exists $x_0\in\O$ such that
  \begin{equation}\label{eq:hensel-to-w}
    f(x_0)=0\quad\text{and}\quad x_0\equiv w\pmod{\pi}.
  \end{equation}
  In particular, we obtain a $K$-rational point on $C$:
  \[
    P:=(x_0,\pi^N)\in C(K).
  \]

  We claim that we can upgrade the congruence in \eqref{eq:hensel-to-w} to
  \begin{equation}\label{eq:better-congruence-for-x0-w}
    \nu(x_0-w)=2N.
  \end{equation}
  The first condition in \eqref{eq:hensel-to-w} implies that $u(x_0^2-r)(x_0^2-s)(x_0^2-t)=\pi^{2N}$, that is
  \[
    (x_0^2-r)(x_0^2-s)(x_0^2-t)\equiv0\pmod{\pi^{2N}}
  \]
  Since $W=(w,0)$ satisfies the equation of $C$, we have that $w^2=r,s$ or $t$; without loss of generality, lets assume that $w^2=r$. Hence the second condition of \eqref{eq:hensel-to-w} implies that $x_0^2\equiv r\spmod{\pi}$. In particular, $\ol{x_0}$ is a root of $\ol{F}$, which is separable, and hence $x_0^2-s,x_0^2-t\not\equiv0\spmod{\pi}$. This shows that $x_0^2\equiv w\spmod{\pi^{2N}}$. Now, if $x_0\equiv w\spmod{\pi^{2N+1}}$, then there exists $a\in \O$ such that $x_0=w+\pi^{2N+1}a$. Thus Taylor's formula would imply that
  \[
    0 = f(x_0)=f(w+\pi^{2N+1}a)= f(w)+f'(w)\pi^{2N+1}a + O(\pi^{4N+2})=\pi^{2N} + O(\pi^{2N+1}),
  \]
  which cannot happen. Thus we have \eqref{eq:better-congruence-for-x0-w}.

  Next we claim that $\s_i(P)-\s_i(W)\in \E_i(\m^N)\setminus\E_i(\m^{N+1})$, so we can then apply Lemma \ref{lemma:key-lemma}. To verify the claim, we use \eqref{eq:better-congruence-for-x0-w}. Recall that $\s_i(W)$ is 2-torsion, so $\s_i(W)$ has $y$-coordinate 0, whereas $\s_i(P)$ has a nonzero $y$-coordinate by how $\s_i$ is defined and the fact that $x_0$ is a unit. Thus $\s_i(P)\neq \pm\s_i(W)$ and the addition formula is simple to use. For simplicity, we write $P_i:=(x_i,y_i):=\s_i(P)$ and $W_i:=(w_i,0):=\s_i(W)$. The addition formula tells us that the $x$-coordinate of $P_i-W_i$ is
  \[
    x(P_i-W_i)=\left(\frac{y_i}{x_i-w_i}\right)^2-x_i-w_i.
  \]
  By the formula for $\s_1$, we have that $y_1=u\pi^N$ and
  \[
    x_1 - w_1 = u(x_0^2-v)-u(w^2-v)=u(x_0-w)(x_0+w).
  \]
  Since $x_0\equiv w\spmod{\pi}$, and $w$ is a unit, then $x_0$ is a unit and $x_0\not\equiv -w\spmod{\pi}$. Thus \eqref{eq:better-congruence-for-x0-w} implies that $\nu(x_1-w_1)=2N$ and hence $\nu(y_1/(x_1-w_1))=-N$. The final term $-x_1-w_1$ is integral and hence
  \[
    \nu(x(P_1-W_1))=-2N.
  \]
  In particular, $P_1-W_1\in \E_1(\m^N)\setminus\E_1(\m^{N+1})$ as required.

  For $i=2$, the computations are very similar. First, we see that $y_2=-urstx_0^2\pi^N$ has valuation $N$ since $u,r,s,t$ and $x_0$ are all units. Secondly, we have
  \[
    x_2-w_2 = ust(x_0^2-r)x_0^{-2}-ust(w^2-r)w^{-2}=ustr(x_0-w)(x_0+w)x_0^{-2}w^{-2},
  \]
  which again has valuation $2N$ by \eqref{eq:better-congruence-for-x0-w}. Since $-x_2-w_2$ is integral, we similarly have that
  \[
    \nu(x(P_2-W_2))=-2N.
  \]
  Hence $P_2-W_2\in \E_2(\m^N)\setminus\E_2(\m^{N+1})$.

  Now we apply Lemma \ref{lemma:key-lemma} to $P_i-W_i\in \E_i(\m^N)\setminus\E_i(\m^{N+1})$. Since $W_i$ is 2-torsion, then $2P_i\in \E_i(\m^N)\setminus\E_i(\m^{N+1})$. Next we apply Lemma \ref{prop:symbols_coming_from_scholten_curves} to the span $E_1\leftarrow C\ra E_2$ to obtain
  \[
    4\{P_1,P_2\}_{K/K}=0.
  \]
  We use bilinearity to conclude that
  \[
    \{2P_1,2P_2\}_{K/K}=0.
  \]
  Since $2P_i\in \E_i(\m^N)\setminus\E_i(\m^{N+1})$, we have that the signature of the above symbol is $(N,N)$. This concludes the proof.

\end{proof}

This method recovers previously known results about $K/K$-symbols \cite[Theorem 4.6]{GazakiHiranouchi2021} for $K=\Q_p$, via completely different methods. Theorem \ref{thm:main-theorem}, for $N=1$, and Example \ref{ex:1-1-signature} combine to give the following.

\begin{corollary}
  Let $E_1$ and $E_2$ be elliptic curves defined over $\Q_p$ with full 2-torsion and supersingular reduction. Then $\Phi_{\Q_p}=0$ in $K(\Q_p;E_1,E_2)/p$.
\end{corollary}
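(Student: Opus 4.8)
The plan is to deduce the corollary directly from Theorem \ref{thm:main-theorem} together with the dimension count of Example \ref{ex:1-1-signature}; since the substantive work is already contained in Theorem \ref{thm:main-theorem}, I expect no genuine obstacle, the only point needing verification being that $\Q_p$ is an admissible base field for that theorem.

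First I would apply Theorem \ref{thm:main-theorem} with $K=\Q_p$ --- which is trivially totally ramified over $\Q_p$ and has absolute ramification index $e=1<p-1$ since $p>3$ --- and with $N=1$. This produces points $P\in\E_1(\m)$ and $Q\in\E_2(\m)$ such that the symbol $\{P,Q\}_{\Q_p/\Q_p}\in K(\Q_p;E_1,E_2)$ has signature $(1,1)$ and satisfies $\{P,Q\}_{\Q_p/\Q_p}\equiv0\pmod{p}$.

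Next I would invoke Remark \ref{rm:Fp-vector-space-structure-of-Emodp}: since $[\Q_p:\Q_p]=1$ we have $e=f=1$, so each $\E_i(\m)/p$ is a one-dimensional $\F_p$-vector space spanned by the class of any element of valuation $1$. Because the signature of $\{P,Q\}_{\Q_p/\Q_p}$ is $(1,1)$, i.e. $\nu(\wh{P})=\nu(\wh{Q})=1$, the classes $[\wh{P}]$ and $[\wh{Q}]$ generate $\E_1(\m)/p$ and $\E_2(\m)/p$ respectively; transporting along the isomorphisms $\E_i(\m)/p\xrightarrow{\sim}E_i(\Q_p)/p$ of Lemma \ref{lemma:formal-group-mod-p}, which identify $[\wh{P}]$ with $[P]$ and $[\wh{Q}]$ with $[Q]$, the classes $[P]$ and $[Q]$ generate $E_1(\Q_p)/p$ and $E_2(\Q_p)/p$. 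Hence $[P]\o[Q]$ generates the one-dimensional space $E_1(\Q_p)/p\o_{\F_p}E_2(\Q_p)/p$.

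Finally I would push this generator through the map \eqref{eq:Phi-L-map} for $L=\Q_p$: it sends $[P]\o[Q]$ to $\{P,Q\}_{\Q_p/\Q_p}\spmod{p}=0$, so the $\F_p$-linear map is identically zero and its image --- the subgroup $\Phi_{\Q_p}=\Symb_{\Q_p/\Q_p}$ of $\Q_p/\Q_p$-symbols modulo $p$ --- vanishes, as claimed. In short, the argument is simply the specialization of the $(N,N)$-signature construction to $N=1$ over the smallest possible base field, where a single relation of the right signature already exhausts the one-dimensional symbol group.
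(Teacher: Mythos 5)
Your proof is correct and is exactly the paper's argument: the paper derives this corollary by combining Theorem \ref{thm:main-theorem} for $N=1$ with the one-dimensionality count of Example \ref{ex:1-1-signature}, which is precisely the specialization you carry out. Your only addition is spelling out the transport along Lemma \ref{lemma:formal-group-mod-p} and the map \eqref{eq:Phi-L-map}, which the paper leaves implicit.
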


Theorem \ref{thm:main-theorem} produces $K/K$-symbols with diagonal signature that vanish. However, one would like to claim that all $K/K$-symbols with diagonal signature vanish. The above results are not enough to show this in full generality, but we do have the following corollary.

\begin{corollary}
  Let $K$ be a totally ramified extension of $\Q_p$, with $p>3$. Let $E_1$ and $E_2$ be elliptic curves with full 2-torsion and supersingular reduction. If the degree $e=[K:\Q_p]$ satisfies $e<p-1$, then every $K/K$-symbol in $K(K;E_1,E_2)$ of signature $(e,e)$ vanishes modulo $p$.
\end{corollary}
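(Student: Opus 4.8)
The plan is to bootstrap from Theorem \ref{thm:main-theorem}: applying it with $N=e$ produces a single ``reference'' symbol of signature $(e,e)$ that already vanishes modulo $p$, and I would then argue that \emph{every} $K/K$-symbol of signature $(e,e)$ is, modulo $p$, a scalar multiple of this one. The reason this works is that $K$ being totally ramified forces the inertia degree $f=1$, so the bottom graded piece of the formal group filtration is only $1$-dimensional.

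First I would record the relevant dimension count. By Remark \ref{rm:Fp-vector-space-structure-of-Emodp}, for $\ell=1,2$ the space $\E_\ell(\m)/p$ carries a finite filtration whose bottom term is the image of $\E_\ell(\m^e)$, namely
\[
  \E_\ell(\m^e)/p \cong k^+ = \F_p,
\]
a $1$-dimensional $\F_p$-subspace of $\E_\ell(\m)/p$ (here $k=\F_p$ precisely because $f=1$). Unwinding Definition \ref{def:signature-of-formal-point-mod-p} together with the fact (established in Section \ref{sec:formal-groups}) that the valuation of a class is independent of its representative, a nonzero class $[\alpha]\in\E_\ell(\m)/p$ has signature $e$ if and only if it is a nonzero element of this line. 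Hence any two classes of signature $e$ in $\E_\ell(\m)/p$ differ by a factor in $\F_p^\times$.

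Next I would invoke Theorem \ref{thm:main-theorem} with $N=e$ to get points $P_e\in\E_1(K)$, $Q_e\in\E_2(K)$ with $\{P_e,Q_e\}_{K/K}$ of signature $(e,e)$ and $\{P_e,Q_e\}_{K/K}\equiv 0\pmod p$; thus $[\widehat{(P_e)}]$ and $[\widehat{(Q_e)}]$ are nonzero in the lines $\E_1(\m^e)/p$, $\E_2(\m^e)/p$. Now given an arbitrary $K/K$-symbol $\{P_1,P_2\}_{K/K}$ of signature $(e,e)$, the classes $[\widehat{P}_\ell]$ lie in the same two lines and are nonzero, so there are $u_1,u_2\in\F_p^\times$ with $[\widehat{P}_1]=u_1[\widehat{(P_e)}]$ and $[\widehat{P}_2]=u_2[\widehat{(Q_e)}]$ in $\E_1(\m)/p$, $\E_2(\m)/p$. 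Transporting along the isomorphisms $\E_\ell(\m)/p \xrightarrow{\ \sim\ } E_\ell(K)/p$ of Lemma \ref{lemma:formal-group-mod-p}, and using $[\widehat{P}_\ell]=[P_\ell]$, $[\widehat{(P_e)}]=[P_e]$, $[\widehat{(Q_e)}]=[Q_e]$ in the respective groups $E_\ell(K)/p$ (cf. \eqref{eq:formal-point-associated-to-P}), I get $[P_1]=u_1[P_e]$ and $[P_2]=u_2[Q_e]$. Feeding $[P_1]\otimes[P_2]=u_1u_2\,[P_e]\otimes[Q_e]$ into the $\F_p$-linear map \eqref{eq:Phi-L-map} then yields
\[
  \{P_1,P_2\}_{K/K}\equiv u_1u_2\,\{P_e,Q_e\}_{K/K}\equiv 0\pmod p.
\]

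I do not expect a genuine obstacle here: once Theorem \ref{thm:main-theorem} is in hand the argument is soft, and the only real ingredient is the $1$-dimensionality of $\E_\ell(\m^e)/p$. The one point requiring a little care is that the proportionality of the two classes should be checked in $\E_\ell(\m)/p$ (equivalently $E_\ell(K)/p$) and not merely in some quotient --- but this is immediate since, with the conventions of Remark \ref{rm:Fp-vector-space-structure-of-Emodp}, $\E_\ell(\m^e)/p$ is literally a subspace of $\E_\ell(\m)/p$. It is worth flagging that the hypothesis that $K$ is totally ramified is essential to the method: when $f>1$ the relevant graded piece has dimension $f>1$ and a single reference symbol no longer suffices --- one would instead need $f^2$ symbols positioned as in Proposition \ref{prop:necessary-conditions-for-PhiL-0}.
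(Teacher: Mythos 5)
Your proposal is correct and follows essentially the same route as the paper: both apply Theorem \ref{thm:main-theorem} with $N=e$ to get a vanishing reference symbol and then show any signature-$(e,e)$ symbol is an $\F_p^\times$-multiple of it, the only (cosmetic) difference being that the paper establishes the proportionality by extending $P_e$ to a filtered basis and invoking the ultrametric property of the signature, whereas you identify the signature-$e$ classes directly with the nonzero elements of the line $\E_\ell(\m^e)/p\cong\F_p$.
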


\begin{proof}
  Let $\{P,Q\}_{K/K}\in K(K;E_1,E_2)$ be a $K/K$-symbol of signature $(e,e)$.
  By Theorem \ref{thm:main-theorem}, there are points $P_e\in \E_1(\m^N)\setminus\E_1(\m^{N+1})$ and $Q_e\in \E_2(\m^N)\setminus\E_2(\m^{N+1})$ such that $\{P_e,Q_e\}_{K/K}\equiv0\spmod{p}$. Since $\E_1(K)/p\cong E_1(K)/p$ is an $e$-dimensional graded $\F_p$-vector space, we can find points $P_1,\ldots,P_{e-1}\in \E_1(K)$ such that $\{[P_1],\ldots,[P_e]\}$ is a basis of $\E_1(K)/p$ and such that $P_i\in\E_1(\m^i)\setminus\E_1(\m^{i+1})$.

  Therefore there are scalars $t_1,\ldots,t_e\in\F_p$ such that $[\wh{P}]=\sum t_i[P_i]$. By the properties of the signature (cf. Proposition \ref{prop:properties-of-signature}), we have that
  \[
    e = \sS([P])=\sS([\wh{P}])=\sS\left(\sum t_i[P_i]\right)=\min_{t_i\neq 0}\{\sS(t_i[P_i])\}=\min_{t_i\neq 0}\{\sS([P_i])\}=\min\{i\mid t_i\neq 0\},
  \]
  since the nonzero summands of $\sum t_i[P_i]$ all have distinct signatures. Hence we must have $t_1=\cdots=t_{e-1}=0$ and $t_e\neq 0$. We conclude that $[\wh{P}]=t_e[P_e]$ for some $t_e\neq0$. Analogously, we have that $[\wh{Q}]=s_e[Q_e]$ for some $s_e\neq0$.
  
  By the definition of the associated formal point, we have that $[P]=[\wh{P}]$ and $[Q]=[\wh{Q}]$ so that
  \[
    \{P,Q\}_{K/K}\spmod{p} =\Phi_K([P]\o[Q]) = \Phi_K([\wh{P}]\o[\wh{Q}])=\Phi_K(t_e[P_e]\o s_e [Q_e]).
  \]
  Since $\Phi_K$ is $\F_p$-linear, we have
  $\Phi_K([P]\o[Q])=t_e s_e\Phi_K([\wh{P}_e]\o[\wh{Q}_e])$. We conclude that
  \[
    \{P,Q\}_{K/K}\spmod{p}= t_e s_e \Phi_K([P_e]\o[Q_e])=t_e s_e\{P_e,Q_e\}_{K/K}\equiv0\spmod{p}.
  \]

\end{proof}

\section{The Case of Quadratic Ramified Extensions}\label{sec:computations}

In the previous section we described how one can choose a specific point on a genus 2 cover of $E_1$ and $E_2$, namely a point that is $p$-adically close to a Weierstrass point, to produce a WR relation with signature $(n,n)$. However, choosing other points yields ``off-diagonal'' signatures as well. In this section we will show computational evidence that this method usually produces enough relations in $K(K;E_1,E_2)/p$ to show that $\Symb_{K/K}=0$, in view of Proposition \ref{prop:necessary-conditions-for-PhiL-0}. 

First we describe the general procedure using SAGE and then present the results. We restrict to the case that $K$ is a totally ramified extension of $\Q_p$ with $p\equiv3\spmod{4}$ (cf. Lemma \ref{lemma:p-3mod4-and-legendre-form}). In particular, $k=\F_p$.

\vspace{3mm}
\noindent\textbf{Step 1:} Given $K$, we compute the elements $\ol{\l}\in\F_p$ for which the Legendre form elliptic curve $y^2=x(x-1)(x-\ol{\l})$ over $\F_p$ is supersingular. These correspond to roots of the supersingular polynomial
\[
  H_p(T):=\sum_{i=0}^{m}\binom{m}{i}^2 T^i,\qquad m=\frac{p-1}{2}.
\]
See Remark \ref{rm:about-ssing-red}. We then obtain a list $\{\ol{\l_1},\ldots,\ol{\l_m}\}\subset\F_p^\times$ of parameters for which $E_{\ol{\l_i}}$ is supersingular.

\vspace{3mm}
\noindent\textbf{Step 2:} For each pair $(\ol{\l},\ol{\mu})$ in the list of supersingular parameters given in the previous step, we compute the six genus 2 curves from Table \ref{table:scholten_maps-condensed} that span $E_\l$ and $E_\mu$ where $\l,\mu\in\Z$ are lifts of $\ol{\l}$ and $\ol{\mu}$. It is possible that some of these 6 curves degenerate when reduced over $k$; we will discard these curves.

\vspace{3mm}
\noindent\textbf{Step 3:} Given a span $E_\l\leftarrow C\ra E_\mu$ from Step 2, we produce $K$-rational points on $C$ in the following manner. We fix some $x_0\in K$ and solve the quadratic equation $y^2=f(x_0)$, where $y^2=f(x)$ is the defining equation of the given hyperelliptic curve. This generates the ``quadratic'' point $(x_0,\sqrt{f(x_0)})$. In general this may not have a solution, but $K$ has enough squares for this method to work reliably. Indeed, the group $K^\times/K^{\times 2}$ has order four.

We can systematically generate these ``quadratic'' points as follows. If $x_0\in K$ has valuation $\nu(x_0)=n$ for some $n\neq0$, then by the form of the defining equation $y^2=f(x)$ of the curve $C$, we have that $\nu(f(x_0))=6n$ (resp. $\nu(f(x_0))=0$) if $n<0$ (resp. $n>0$). Hence if $y_0=\sqrt{f(x_0)}$ is defined over $K$, then $\nu(y_0)=3n$ (resp. $\nu(y_0)=0$). By the formulas for the spanning maps $\s_1$ and $\s_2$, then the coordinates $(x_i,y_i):=\s_i(x_0,y_0)$ have valuations
\[
  \nu(x_1)=\begin{cases}
    2n &\text{if}\; n<0\\
    0 &\text{if}\; n>0,
  \end{cases}
  \quad\text{and}\quad
  \nu(x_2)=\begin{cases}
    0 &\text{if}\; n<0\\
    -2n &\text{if}\; n>0.
  \end{cases}
\]
This means that $\s_1(x_0,y_0)\in \E_1(\m^{|n|})\setminus\E_1(\m^{|n|})$ if $n<0$ and $\s_2(x_0,y_0)\in \E_2(\m^n)\setminus\E_2(\m^n)$ if $n>0$. We know by Lemma \ref{lemma:formal-group-mod-p} that we only need to consider the case when $|n|<e+1$. Since we can view $K$ as Laurent series in some uniformizer $\pi$, then we can take $x_0$ in the finite set of truncated Laurent series
\[
  \mathcal{B}:=\{c_{-e}\pi^{-e}+\cdots +c_0+c_1\pi+\cdots+c_e\pi^e\in K\mid c_i=0,\ldots,p-1\}.
\]
Note that this set has $p^{2e+1}$ many elements.

\vspace{3mm}
\noindent\textbf{Step 4:} Given a span $E_\l\leftarrow C\ra E_\mu$ from Step 2 and a $K$-rational point $P_0\in C(K)$ from Step 3, we compute the signature of $\{\s_1(P_0),\s_2(P_0)\}$ by computing the $\m$-adic expansions of $n_1\s_1(P)\in \E_1(\m)$ and $n_2\s_2(P)\in\E_2(\m)$ where $n_i$ is the order of the reduced point $\ol{\s_i(P)}$ on the reduced elliptic curve (see the discussion after Definition \ref{def:associated-formal-point}). From the resulting list of signatures, we check if it includes (1,1), (1,2), (2,1) and (2,2). In view of Example \ref{ex:quadratic-ramified}, this would imply that $\Symb_{K/K}=0$.

\vspace{3mm}

Putting everything together, we have the following algorithm.
\begin{algorithm}\label{alg:signatures} Let $K$ be a quadratic ramified extension of $\Q_p$ with $p>3$.
  \begin{enumerate}
    \item[Input:] A pair of integers $(\l,\mu)$ such that $E_\l$ and $E_\mu$ have supersingular reduction.
    \item[Output:] A list of signatures $(i,j)$ corresponding to WR relations of the form $\{P,Q\}_{K/K}\equiv0\spmod{p}$ in $K(K;E_1,E_2)$.
  \end{enumerate}
\end{algorithm}

\begin{example}($K=\Q_{7}(\sqrt{7})$) We have that $K$ is a ramified quadratic extension of $\Q_7$. We fix $\pi:=\sqrt{7}$ for the uniformizer. By Duering's criterion, we see that $E_\l$ is supersingular if $\l\equiv2,4,6\spmod{p}$. So let us consider the elliptic curves $E_2$ and $E_6$.
  
  The spanning genus 2 curves are given by
  \begin{gather*}
    C_1:\; y^2 = -\frac{1}{2}(x^2-5)(x^2-3)(x^2-1),\quad
    C_2:\; y^2 = \frac{2}{11}(x^2+5)(x^2-6)(x^2-\tfrac{1}{2})\\
    C_3:\; y^2 = -\frac{1}{2}(x^2+5)(x^2+3)(x^2+1),\quad 
    C_4:\; y^2 = \frac{2}{11}(x^2-5)(x^2+6)(x^2+\tfrac{1}{2})
  \end{gather*}
  The last two curves degenerate over $\F_7$. Notice that each of these curves has a $K$-rational Weierstrass point since the sextic polynomials all have at least one factor that splits (e.g. $-5$ is a square in $\Q_7$).

  The set $\mathcal{B}$ has $p^{2e+1}=7^5$ elements so we have plenty of $K$-rational points on the various $C_i$'s. Consider the following four:
  \begin{align*}
    P&=(3+\pi+3\pi^2+4\pi^4+\cdots,3+4\pi+6\pi^2+6\pi^3+\cdots)\in C_1(K),\\
    Q&=(2\pi^{-1}+6\pi+\pi^2+\pi^3+\cdots,2\pi^{-3}+6\pi^{-1}+3+\pi+3\pi^2+3\pi^3+\cdots)\in C_4(K),\\
    S&=(4\pi+2\pi^3+3\pi^4+\cdots,2+3\pi^2+\pi^4+\cdots)\in C_2(K),\\
    T&=(2\pi^2+5\pi^3+3\pi^4+\cdots,2\pi^2+3\pi^4+\cdots)\in C_1(K).
  \end{align*}
  Then one can compute the signatures to be
  \begin{align*}
    \sS(\{\s_{1}(P),\s_{2}(P)\}_{K/K})&=(1,1),\\
    \sS(\{\s_{1}(Q),\s_{2}(Q)\}_{K/K})&=(1,2),\\
    \sS(\{\s_{1}(R),\s_{2}(R)\}_{K/K})&=(2,1),\\
    \sS(\{\s_{1}(T),\s_{2}(T)\}_{K/K})&=(2,2).
  \end{align*}
  Thus by Proposition \ref{prop:necessary-conditions-for-PhiL-0} we can conclude that $\Symb_{K/K=0}$ in $K(K;E_{2},E_{6})/7$.
\end{example}

In view of the above example, given a totally ramified $p$-adic field $K$ and a pair of Legendre form elliptic curves $E_\l$ and $E_\mu$ with supersingular reduction, we say that \emph{Algorithm \ref{alg:signatures} proves $\Symb_{K/K}=0$}, or that it is \emph{successful}, if the output of the algorithm includes the signatures (1,1), (1,2), (2,1) and (2,2). We run Algorithm \ref{alg:signatures} for all totally ramified quadratic extensions of $\Q_p$ for the first ten primes $p\equiv3\spmod{4}$ and check the success. The results are in table \ref{table:signature-statistics-for-quadratic-exts} below.

The algorithm fails to be 100\% successful for $p=23,71$. However, this does not prove that the proposed method to produce WR relations will not work. For example, this algorithm only uses six genus 2 curves to produce WR, where there are infinitely many hyperelliptic covers one can use by \cite{GazakiLove2023}.

\begin{table}
  \caption{Results of Algorithm \ref{alg:signatures} applied to all quadratic ramified extensions of $\Q_p$ for the first 10 primes $p\equiv3\spmod{4}$}
  \label{table:signature-statistics-for-quadratic-exts}
  \begin{center}
    {\renewcommand{\arraystretch}{1.3}
    \begin{tabular}{|c|c|c||c|c|c|}
      \hline
      $p$ & $K$ & $\begin{gathered}
      \text{Success rate of}\\ \text{Algorithm \ref{alg:signatures}}
      \end{gathered}$ & $p$ & $K$ & $\begin{gathered}
      \text{Success rate of}\\ \text{Algorithm \ref{alg:signatures}}
      \end{gathered}$\\
      \hline
      \multirow{2}{*}{$7$} & $\Q_7(\sqrt{7})$ & $9/9=100\%$ &\multirow{2}{*}{$43$} & $\Q_{43}(\sqrt{43})$ & $9/9=100\%$ \\ \cline{2-3}\cline{5-6}
      & $\Q_7(\sqrt{-7})$ & $9/9=100\%$ &  & $\Q_{43}(\sqrt{-43})$ & $9/9=100\%$ \\ \hline

      \multirow{2}{*}{$11$} & $\Q_{11}(\sqrt{11})$ & $9/9=100\%$ &\multirow{2}{*}{$47$} & $\Q_{47}(\sqrt{47})$ & $225/225=100\%$ \\ \cline{2-3}\cline{5-6}
      & $\Q_{11}(\sqrt{-11})$ & $9/9=100\%$ &  & $\Q_{47}(\sqrt{-47})$ & $225/225=100\%$ \\ \hline

      \multirow{2}{*}{$19$} & $\Q_{19}(\sqrt{19})$ & $9/9=100\%$ &\multirow{2}{*}{$59$} & $\Q_{59}(\sqrt{59})$ & $81/81=100\%$ \\ \cline{2-3}\cline{5-6}
      & $\Q_{19}(\sqrt{-19})$ & $9/9=100\%$ &  & $\Q_{59}(\sqrt{-59})$ & $81/81=100\%$ \\ \hline

      \multirow{2}{*}{$23$} & $\Q_{23}(\sqrt{23})$ & $63/81=77.77\%$ &\multirow{2}{*}{$67$} & $\Q_{67}(\sqrt{67})$ & $9/9=100\%$ \\ \cline{2-3}\cline{5-6}
      & $\Q_{23}(\sqrt{-23})$ & $63/81=77.77\%$ &  & $\Q_{67}(\sqrt{-67})$ & $9/9=100\%$ \\ \hline

      \multirow{2}{*}{$31$} & $\Q_{31}(\sqrt{31})$ & $81/81=100\%$ &\multirow{2}{*}{$71$} & $\Q_{71}(\sqrt{71})$ & $404/441=91.61\%$ \\ \cline{2-3}\cline{5-6}
      & $\Q_{31}(\sqrt{-31})$ & $81/81=100\%$ &  & $\Q_{71}(\sqrt{-71})$ & $405/441=91.83\%$ \\ \hline
    \end{tabular}
    }
  \end{center}
\end{table}


\bibliography{bibliography}
\bibliographystyle{alpha}


\end{document}